\numberwithin{equation}{section}
\theoremstyle{definition}
\newtheorem{defi}{Definition}[section]
\newtheorem{rem}[defi]{Remark}
\theoremstyle{plain}
\newtheorem{theorem}{Theorem}[section]
\newtheorem{lem}[defi]{Lemma}
\newtheorem{cor}[defi]{Corollary}
\newtheorem{prop}[defi]{Proposition}
\numberwithin{equation}{section}
\def\R{\mathbb{R}}
\def\Z{\mathbb{Z}}
\def\C{\mathbb{C}}
\def\PP{\mathcal{P}}
\def\RR{\mathcal{R}}
\def\SS{\mathcal{S}}
\renewcommand{\r}{\mathbb{R}}
\newcommand{\z}{\mathbb{Z}}
\renewcommand{\and}{\quad\textrm{ and }\quad}
\DeclareMathOperator{\curl}{curl}
\let \epsilon\varepsilon
\let \phi\varphi
\author{Tomasz Cie\'{s}lak}
\address{Institute of Mathematics \newline Polish Academy of Sciences \newline \'Sniadeckich 8, 00-656 Warszawa, Poland}
\email{cieslak@impan.pl}
\author{Piotr Kokocki}
\address{Faculty of Mathematics and Computer Science \newline Nicolaus Copernicus University \newline Fryderyka Chopina 12/18, 87-100 Toru\'n, Poland}
\email{pkokocki@mat.umk.pl}
\author{Przemys\l{}aw Kosewski}
\address{Faculty of Mathematics and Information Sciences \newline Warsaw University of Technology \newline
Koszykowa 75, 00-662 Warsaw, Poland}
\email{kosewski.przemek@gmail.com}
\title[Uniqueness problem for Prandtl spirals]{Uniqueness problem for Prandtl spirals}
\begin{document}
\begin{abstract}
This paper is devoted to the study of the following problem: Is the divergence-free velocity field introduced in \cite{CKO} the unique such field associated with vorticity given by the Prandtl spiral? Without additional assumptions, the answer is negative. However, we show that if the class of admissible velocities is restricted to those satisfying the velocity matching condition and an appropriate decay condition at the origin of the spiral, then the velocity field is uniquely determined. We subsequently extend the result to the case of fields with vorticity composed of unions of concentric logarithmic spirals. As a by-product, we derive an alternative way of deriving formula for the velocity corresponding to the Prandtl spirals.

The proof relies on an approach that is of independent interest. We construct an explicit conformal map from the exterior of a logarithmic spiral onto a strip. This transformation reduces the problem to establishing the uniqueness of a holomorphic function defined on the strip, under non-standard boundary and decay conditions.
\end{abstract}

\subjclass{76M40, 76B47} 

\keywords{vortex sheet, conformal mapping, Prandtl spiral.}

\maketitle

\section{Introduction}
The velocity field corresponding to the vorticity induced by a single Prandtl spiral, as well as by a set of $N\geq 2$ uniformly distributed such spirals, was explicitly constructed in \cite{CKO}. Recall that the Prandtl spirals are well-known objects in hydromechanics, postulated by Helmholtz himself in his foundational paper on discontinuous flows \cite{Helmholtz}. They were considered due to the observability of spiraling flows in nature and the conjecture -- tracing back to Helmholtz -- that such structures are inherently unstable and may give rise to further instabilities, such as the well-known von K\'arm\'an vortices. Many authors have investigated flows exhibiting spiral patterns -- see, for instance, Hamel \cite{Hamel}. A well-known example of logarithmic spiral vortex sheets was introduced by Prandtl \cite{Prandtl}, who associated them with the flow detachment from the wingtip of a plane during takeoff. Alexander later extended Prandtl’s construction to several uniformly distributed spirals; see \cite{alexander}. For more recent observations of spiral-patterned flows, see \cite{Everson}; for applications in aviation, see \cite{saffman}.

While Prandtl spirals have appeared in the engineering literature for over a century, their rigorous and exact mathematical status was not well understood until very recently. First, Elling and Gnann \cite{Elling} observed that a family of $N\geq 3$ uniformly distributed logarithmic spirals satisfies the Birkhoff-Rott equation. Subsequently, the first two authors of the present article, together with O\.za\'nski, showed that not only a single Prandtl spiral, but also any set of $N\geq 2$ uniformly distributed logarithmic spirals, serves a support of the vorticity field that gives rise to velocity field solving the weak 2D Euler equation, see \cite{CKO}. In a moment, we shall review the main points of this construction. Let us only recall that bifurcation branches of non-symmetric logarithmic spirals solving the 2D Euler equation was constructed in \cite{CKO2}. An alternative construction of the Prandtl spiral as a weak solution to the 2D Euler equation was provided by Jeong and Said in \cite{JS}, where the results of \cite{CKO2} were also independently rederived using a different approach.

Let us recall that the Prandtl spiral is a time-dependent flow with a vorticity concentrated on a family of curves $\{\Sigma(t)\}_{t>0}$, parameterized by
\begin{equation}\label{spirala}
Z(\theta,t) =t^\mu e^{a\theta}e^{i\theta}, \quad t>0,
\end{equation}
where $\theta\in\R$ is an angular parameter and $a>0$, $\mu\in\R$ are constants. Moreover, the total circulation along the spiral is given by
\begin{equation*}
\Gamma(\theta, t)=gt^{2\mu-1}e^{2a\theta}, \quad t>0,
\end{equation*}
where $g\in\r\setminus\{0\}$ is a constant. In \cite[eq. (1.12)]{CKO}, the following formula for the velocity field associated with the Prandtl spiral was derived:
\begin{equation}\label{predkosc}
v(z,t)=t^\mu w(z/t^\mu), \quad z\in \RR(t), \ t>0,
\end{equation}
where $\RR(t):=\C\setminus(\Sigma(t)\cup\{0\})$ is the time-dependent complement of the spiral and the profile function is expressed in the polar coordinates as 
\begin{equation}\label{form-w}
w(z)=e^{i\theta}\frac{2ag}{r(a-i)}\left(r^{\frac{2a}{a+i}}e^{-A\theta}\frac{e^{2\pi J(r,\theta)A}}{1-e^{2\pi A}}\right)^*
\end{equation} 
for $z=re^{i\theta}\in \RR:=\RR(1)$. In the above formula $A:=-2ai/(a+i)$ and $J(r,\theta)$ is a winding number of the spiral $\Sigma:=\Sigma(1)$, given by 
\begin{equation}\label{def-j}
J(r,\theta)=\min \{j\in \z \ | \ a(2\pi j-\theta)+\ln r >0\}.
\end{equation}
We know that the profile function \eqref{form-w} can be recovered as the derivative of a suitable complex potential. In fact, one can show that $w^{*} = \Phi'$, where 
\begin{align}\label{eq-phi}
\Phi(z) = \frac{g}{1-e^{2\pi A}}e^{iA(\ln r + i(\theta - 2\pi J(r,\theta)))}, 
\end{align}
for $z=re^{i\theta}\in\RR$. In \cite[Theorem 1.8]{CKO}, it was shown that $v$ is a weakly divergence-free vector field satisfying
\begin{equation}\label{eq-omega-1} 
\mathrm{curl}\,v (t) = \gamma(t)\, \delta_{\Sigma(t)}, \quad t>0
\end{equation}
in the distributional sense, where the measure density is given by
\begin{equation}\label{gestosc2bb}
\gamma(t,Z(t,\theta)):=\frac{\partial_\theta \Gamma(t,\theta)}{|\partial_\theta Z(t,\theta)|} =
\frac{2ag t^{\mu-1}}{\sqrt{1+a^{2}}} e^{a\theta}, \quad \theta\in\R.
\end{equation}
Moreover, the velocity profile satisfies the decay condition at the origin
\begin{equation}\label{decay}
|w(z)| \lesssim |z|, \quad z\to 0, \ z\in\RR.
\end{equation}
A natural question then is whether the time dependent velocity field defined by \eqref{predkosc}, \eqref{form-w} is the only one on $\R^{2}$ whose vorticity matches \eqref{eq-omega-1} and \eqref{gestosc2bb}, and whose decay at the origin satisfies \eqref{decay}. This question is both significant and nontrivial for the following reasons. \\[3pt]
{\em Irregularity of the Prandtl-spiral vorticity.} 
The vorticity associated with the Prandtl spiral is so singular that the Biot–Savart integral formula fails to converge, preventing direct use of the usual integral representation. \\[3pt]
{\em Non-uniqueness up to holomorphic additions.}
Away from the spiral itself, the flow is irrotational, so adding any holomorphic function to $w$ leaves the vorticity unchanged. Therefore, additional constraints on the class of velocity fields must be imposed to rule out such ``hidden'' degrees of freedom. \\[3pt]
{\em Linear growth at infinity.}
Our method does not impose any growth‐type restrictions on $w$. Since the velocity field grows linearly at infinity, no Liouville type argument (which would require boundedness) can be applied. \\[3pt]
\indent Following \cite{CKO}, we focus on the self-similar logarithmic spiral vortex sheets, for which the profile map $w$ satisfies the continuity condition for normal components
\begin{equation}\label{skoki-nor}
(w^R(z)-w^L(z))\cdot \vec{n}(z) = 0, \quad z\in \Sigma,
\end{equation} 
and the tangential jump condition 
\begin{equation}\label{skoki-tan}
(w^R(z)-w^L(z))\cdot\vec{\tau}(z) = \gamma(z), \quad z\in \Sigma, 
\end{equation}
where $w^{R}(z)$ and $w^{L}(z)$ are the limit velocities $w(z')$ as $z'\to z$ from the right and left sides of the spiral $\Sigma$ with respect to its natural orientation. Moreover, $\vec{\tau}(z)$ and $\vec{n}(z) := -i\vec{\tau}(z)$ are the unit tangent and normal vectors to the spiral, respectively, and $\gamma:=\gamma(1)$ is a density function. According to the formula \eqref{spirala}, the spiral $\Sigma$ is parametrized by the mapping
\begin{equation*}
Z(\theta):=Z(\theta,1) = e^{a\theta}e^{i\theta}, \quad \theta \in \mathbb{R}
\end{equation*}
and the corresponding unit tangent and normal vectors are given by
\begin{align}\label{tan-form}
\vec{\tau}(Z(\theta)) & = \frac{1}{\sqrt{a^2+1}} [a \cos \theta - \sin \theta, \cos \theta + a \sin \theta], \\ \label{norm-form}
\vec{n}(Z(\theta)) & = \frac{1}{\sqrt{a^2+1}} [\cos \theta + a \sin \theta, \sin \theta - a \cos \theta].
\end{align}
Furthermore, the density function satisfies
\begin{equation}\label{gestosc}
\gamma(Z(\theta))=\frac{\partial_\theta \Gamma(\theta)}{|\partial_\theta Z(\theta)|} =
\frac{2ag}{\sqrt{1+a^{2}}} e^{a\theta}, \quad \theta\in\R,
\end{equation}
where $\Gamma(\theta) := \Gamma(\theta, 1)$. Let us emphasize that \eqref{skoki-nor} and \eqref{skoki-tan} represent the standard jump conditions for vortex sheets. They are, respectively, equivalent to the condition that the velocity profile $w$ is divergence-free and that its vorticity satisfies \eqref{eq-omega-1} in the sense of distributions. 

Moreover, as we point out in \cite[Theorem 1.2]{CKO}, if the self-similar vector field $v$ is a weak solution of the 2D Euler equation, then the profile function satisfies the velocity matching condition
\begin{equation}\label{vel_match}
\vec{n}(z)\cdot\left(w(z)-\mu z\right)=0, \quad z\in \Sigma.
\end{equation}
Observe that condition \eqref{vel_match} is well-defined since, by \eqref{skoki-nor}, we can set $\vec{n}(z)\cdot w(z) := \vec{n}(z)\cdot w^{L}(z) = \vec{n}(z)\cdot w^{R}(z)$ for $z\in\Sigma$. We are now ready to formulate our main theorem and discuss the strategy of its proof. 
\begin{theorem}\label{glowne}
Let $w$ be a two-dimensional weakly divergence-free velocity field satisfying conditions \eqref{vel_match} and \eqref{decay}. Furthermore, suppose that
\begin{equation}\label{dist-eq}
\curl w = \gamma\, \delta_{\Sigma}
\end{equation}
in the sense of distributions, where $\delta_{\Sigma}$ denotes the Dirac measure supported on the spiral $\Sigma$. Then $w$ is uniquely determined. Moreover, if we additionally assume that the spiral parameters satisfy the condition
\begin{equation}\label{im_Pr}
2ag \sin\left(\frac{4\pi a^2}{1+a^2}\right) = \mu \left(2\cos\left(\frac{4\pi a^2}{1+a^2}\right) - e^{\frac{-4\pi a}{1+a^2}} - e^{\frac{4\pi a}{1+a^2}}\right),
\end{equation}
then the field $w$ is given by the formula \eqref{form-w}.
\end{theorem}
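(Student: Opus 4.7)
The plan is to reduce uniqueness to a Liouville-type statement on a strip via an explicit conformal change of variables, using Schwarz reflection to compensate for the lack of growth control at infinity. Let $w_1, w_2$ be two solutions and set $u := w_1 - w_2$. Since both share the same distributional vorticity, \eqref{skoki-nor} and \eqref{skoki-tan} imply that $u$ has no jumps across $\Sigma$, so $\div u = \curl u = 0$ distributionally on $\R^{2}\setminus\{0\}$, and the complex conjugate $v := u^{*}$ is holomorphic on $\C\setminus\{0\}$. The decay \eqref{decay} applied to both fields yields $|v(z)|\lesssim|z|$ near the origin, so $v$ extends to an entire function with $v(0)=0$. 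Rewriting \eqref{vel_match} for $u$ in complex form gives the boundary identity
\[ \Re\bigl[(1-ia)e^{i\theta}v(Z(\theta))\bigr]=0 \quad \text{for every } \theta\in\R. \]

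Next, I introduce the conformal substitution $z = e^{(a+i)\zeta}$, which maps a strip of width $h := 2\pi a/(a^{2}+1)$ biholomorphically onto $\RR$, with translation period $T := 2\pi i/(a+i) = p + ih$, $p := 2\pi/(a^{2}+1)$. Setting
\[ V(\zeta) := v(e^{(a+i)\zeta}), \qquad W(\zeta) := (1-ia)e^{i\zeta}V(\zeta), \]
both functions are entire, $V$ is $T$-periodic, the boundary condition collapses to $\Re W = 0$ on $\R$, and $W$ inherits the quasi-periodicity $W(\zeta + T) = e^{iT}W(\zeta)$. The decisive step is Schwarz reflection across $\R$: because $W$ is entire and purely imaginary on $\R$, one has $W(\bar\zeta) = -\overline{W(\zeta)}$ on $\C$, and combining this identity with the first quasi-periodicity produces a second, independent quasi-periodicity $W(\zeta + \bar T) = e^{-i\bar T}W(\zeta)$. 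Setting $G(\zeta) := e^{a\zeta}W(\zeta)$, the algebraic identities $ap = h$ and $(a+i)T = 2\pi i$ make both multipliers trivial, so $G$ is doubly periodic with the $\R$-linearly independent periods $T$ and $\bar T$; by Liouville it must be constant. Finally, $|v(z)|\lesssim|z|$ translates to $|G(\zeta)|\lesssim e^{2a\Re\zeta - 2\Im\zeta}$, which vanishes as $\Re\zeta\to-\infty$, so $G \equiv 0$, hence $v\equiv 0$ and $w_1 = w_2$.

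For the explicit-formula assertion, I invoke \cite{CKO} to see that the field \eqref{form-w} already satisfies \eqref{skoki-nor}, \eqref{skoki-tan}, and \eqref{decay}; a direct evaluation of the normal component of $w - \mu z$ on $\Sigma$ using the complex potential representation \eqref{eq-phi} reduces the matching condition \eqref{vel_match} to precisely the algebraic identity \eqref{im_Pr}. Under \eqref{im_Pr} the field \eqref{form-w} therefore meets all hypotheses, and the uniqueness part forces $w$ to coincide with it. The principal obstacle throughout is the linear growth of $w$ at infinity, which blocks any direct Liouville argument in the $z$-variable; the crucial device circumventing this is that Schwarz reflection across the real axis manufactures the missing second independent quasi-period out of the single boundary condition on $\partial\RR$, upgrading the ambient structure to genuine double periodicity where Liouville becomes applicable.
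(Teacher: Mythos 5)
Your proof is correct, but the uniqueness argument follows a genuinely different route from the paper's. The paper never subtracts the two solutions in the physical plane: it transports each candidate to the vertical strip $\SS$ via $f(z)=e^{(1-ai)z}$, where the hypotheses become the Dirichlet data \eqref{eq-36} for $\tilde w_1$ plus the non-standard condition \eqref{eq-39} prescribing only a \emph{difference} of $\tilde w_2$ across the strip; it then forms the auxiliary holomorphic function $h^j=(\tilde w^j)^*+\tilde w_1^j\circ\PP_+ + i\,\tilde w_2^j\circ\PP_+$, whose trace on one edge is fully known, obtains $h^1=h^2$ by Schwarz reflection and the identity theorem, and recovers $\tilde w_2^j$ from $h^j$ through a telescoping series driven by the decay as $y\to-\infty$. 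You subtract first, note that the difference $v=u^*$ has no jumps and a removable singularity at $0$, hence is entire, pull back by $\zeta\mapsto e^{(a+i)\zeta}$, and use Schwarz reflection across $\R$ (the preimage of $\Sigma$) to manufacture a second independent quasi-period out of the single homogeneous condition $\mathrm{Re}\,W=0$; the renormalization $G=e^{a\zeta}W$ is then doubly periodic with periods $T$ and $\bar T$, so Liouville applies with boundedness for free --- this cleanly sidesteps the linear-growth obstruction the paper flags --- and the decay at the origin forces $G\equiv 0$. I checked the multipliers ($e^{(a+i)T}=e^{2\pi i}=1$ and $e^{(a-i)\bar T}=e^{-2\pi i}=1$) and the bound $|G(\zeta)|\lesssim e^{2a\mathrm{Re}\,\zeta-2\mathrm{Im}\,\zeta}$; both are right, and your passage from the distributional hypotheses to holomorphy of $v$ across $\Sigma$ uses the same implicit trace regularity the paper itself assumes. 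What the paper's longer route buys is the explicit determination of $\tilde w$ in the strip (Proposition \ref{wpasie}), which yields the advertised alternative derivation of \eqref{form-w} and shows that no admissible $w$ exists when \eqref{im_Pr} fails; your treatment of the second assertion instead verifies that \eqref{form-w} satisfies the hypotheses under \eqref{im_Pr} (the normal-component computation does reduce \eqref{vel_match} for \eqref{form-w} to $2ag\,\mathrm{Im}\bigl(e^{2\pi A}/(1-e^{2\pi A})\bigr)=\mu$, which is exactly \eqref{im_Pr}) and cites \cite{CKO} for \eqref{dist-eq} and \eqref{decay}, then invokes uniqueness --- logically sufficient for the theorem as stated, though it outsources the existence half to \cite{CKO} rather than rederiving it.
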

The condition \eqref{im_Pr} is precisely the pressure matching condition (see \cite{CKO}), which is necessary for a logarithmic spiral vortex sheet to be a weak solution of the 2D Euler equation. When this holds, the velocity field $w$ is given by \eqref{form-w} and it is a weak solution to the 2D Euler equation.
We also recall that assuming $w$ to be weakly divergence-free and that its curl in the weak sense equals $\gamma\,\delta_{\Sigma}$, the conditions \eqref{skoki-nor} and \eqref{skoki-tan} follow immediately. 

Observe that Theorem \ref{glowne} establishes the existence of at most one velocity field corresponding to the Prandtl spiral vorticity, provided \eqref{vel_match} and \eqref{decay} hold. These are necessary conditions for a spiral vortex sheet to solve the 2D Euler equation.
As we will see in the proof of Theorem \ref{glowne}, unless the pressure matching condition is also assumed, the velocity $w$ does not exist.

Let us furthermore emphasize the role of the velocity matching condition in the above theorem. Indeed, without this assumption, the theorem no longer holds, as illustrated in the following remark.
\begin{rem}\label{poglowne}
Let $w$ be a 2D weakly divergence-free velocity field satisfying the decay condition \eqref{decay} and the distributional equality \eqref{dist-eq}. Then the perturbed mapping $w(z)+f(z)$, where $f$ is any holomorphic function satisfying $f(0)=0$, also satisfies \eqref{decay}. Moreover, it is divergence-free, fulfills the conditions \eqref{skoki-nor} and \eqref{skoki-tan}, and its $\curl$ in the distributional sense remains equal to $\gamma\delta_{\Sigma}$. \hfill $\square$
\end{rem}
A further results obtained in this paper concerns the case of multiple concentric spirals. We prove the uniqueness of the divergence-free velocity field corresponding to vorticity supported on a family of such spirals. As in the case of a single Prandtl spiral, uniqueness follows from the velocity matching condition and the decay at the origin. The precise statement is given in Section~\ref{wiele_spiral}, where we formulate the non-standard boundary value problem that characterizes the divergence-free velocity field corresponding to vorticity supported on a family of concentric logarithmic spirals. Under the additional assumption of the pressure matching condition, the boundary problem also yields an alternative derivation of the corresponding velocity formula; see Theorem~\ref{th_wiele_spiral}. In this case, the pressure matching condition takes the form of a system of discrete equations discrete equations involving the parameters that describe the spirals.

Let us briefly outline the proof of Theorem~\ref{glowne}, which will be presented in several steps. We begin by deriving an explicit formula for a conformal mapping that sends an infinite vertical strip in the complex plane onto the exterior $\RR$ of the logarithmic spiral $\Sigma$. This mapping is of independent interest and, up to our knowledge, it does not appear in the existing literature (see, for example \cite{Ahlfors} or \cite{Rudin}). A detailed analysis of this transformation and its interesting connection with the winding number \eqref{def-j} is given in Section~\ref{przeksztalcenie} (see also Lemma \ref{lem-winding}). Let us also mention that logarithmic spirals appear in other areas of mathematics; for instance, they play a key role in \cite{Markovic}, where they are used to construct counterexamples to conjectures of Thurston and Sullivan. We hope that our conformal mapping will find applications beyond hydromechanics. 

In Section~\ref{jedna_spirala}, we present the proof of the uniqueness part of Theorem~\ref{glowne}. To this end, we transform the setting to a strip via the conformal mapping from Section \ref{przeksztalcenie}. Although the reduction is crucial it does not lead to a standard problem. We shall deal with the uniqueness of the holomorphic function in a strip, but the boundary values on both edges of the strip are not fully known. Instead, we shall face a rather non-standard problem, where only the differences in the imaginary part of the function between the two sides of the strip are known. This follows from the assumption \eqref{skoki-tan}, which corresponds to the fact that the curl of the considered velocity is a measure supported on the Prandtl spiral. Let us note that the decay at the boundary of the strip suggests one could bypass the non-standard problem by applying a Phragm\'en-Lindel\"{o}f type principle to identify the velocity. In this context, the results of Widder \cite{Widder} would be particularly useful. However, this approach comes at the cost of restricting the class of admissible functions to those exhibiting sufficient decay at infinity. As we shall see in Section~\ref{sec-4}, this would require significantly limiting the range of parameters $a>0$ under consideration.  

Then, in Section~\ref{sec-velocity}, we present an alternative derivation of the formulas \eqref{predkosc} and \eqref{form-w} for the velocity field~$v$ using the conformal transformation. These formulas play an important role in understanding Prandtl spirals as weak solutions to the 2D Euler equations. They actually show that the asymptotic behavior of the velocity field near the origin of the spiral differs from what is commonly assumed in the aerodynamic literature; see the discussion at the bottom of p. 519 in \cite{CKO} concerning the asymptotic expansion from \cite{saffman}. Finally, the extension of our result to a family of concentric logarithmic spirals is presented in Sections~\ref{wiele_spiral}\,--\,\ref{sec-velocity-m}. Let us mention that our present derivation of the velocity formula does not make use of the tricky cancellation relations in \cite{Elling}.

\section{Conformal mapping}\label{przeksztalcenie}
In this section, we provide a formula for the conformal mapping from a strip onto the exterior $\RR$ of logarithmic spiral $\Sigma$. We then study its properties, which will be useful both in our proof of uniqueness and in an alternative computation of the velocity field associated with the Prandtl spiral. We also recall the definition of the winding number \eqref{def-j}, which plays a crucial role in the construction of the inverse conformal mapping and is also essential for the velocity formula derived in \cite{CKO}.
\begin{lem}\label{mapping}
Let us assume that 
\[
\SS:=\left\{z\in\C \,:\,  -\frac{2\pi a}{1+a^2}<\mathrm{Re}\,z<0\right\}
\]
is a vertical strip in the complex plane. Then the following function 
\begin{equation*}
f(z):=e^{(1-ai)z}, \quad z\in \SS
\end{equation*}
is a well-defined biholomorphic mapping between $\SS$ and $\RR$ with inverse
\begin{align*}
f^{-1}(z) = \frac{\ln r - a\theta + 2a\pi(J(r,\theta) - 1)}{1 + a^{2}} + i\frac{\theta + a\ln r - 2\pi(J(r,\theta) - 1) }{1 + a^{2}}
\end{align*}
for $z= re^{i\theta}\in \RR$, where $r>0$ and $\theta\in \r$. We recall that $J(r,\theta)$ is defined in the formula \eqref{def-j} and 
$\RR=\C\setminus(\Sigma\cup\{0\})$ denotes the complement in the complex plane of the logarithmic spiral. In particular, $f$ is conformal.
\end{lem}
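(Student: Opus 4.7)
The plan is to establish the four required properties — that $f$ maps $\SS$ into $\RR$, is injective on $\SS$, is surjective onto $\RR$, and admits the announced explicit inverse — by direct computation with the expansion $f(x+iy)=e^{x+ay}e^{i(y-ax)}$. Conformality then comes for free from $f'(z)=(1-ai)f(z)\neq 0$, since a holomorphic bijection between plane domains is automatically biholomorphic.

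For the first property I would compare the polar data of $f(z)$ with the defining equation $\ln r=a(\phi+2\pi k)$ of $\Sigma$. After cancellation this collapses to $x(1+a^2)=2\pi ak$, whose two integer solutions $k=0$ and $k=-1$ correspond exactly to the two boundary lines of $\SS$. Hence no interior point of the strip maps onto $\Sigma$, and since $f$ is nowhere zero we conclude $f(\SS)\subset\RR$. Injectivity reduces to the observation that any two preimages of the same point must differ by $\frac{2\pi ik}{1-ai}=\frac{-2\pi ak+2\pi ki}{1+a^2}$ for some $k\in\Z$, whose real part has modulus at least the strip width $\frac{2\pi a}{1+a^2}$ unless $k=0$. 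This is precisely the reason the strip is given this exact width.

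For surjectivity together with the explicit inverse I would take $w_0=re^{i\theta}\in\RR$ and solve $(1-ai)z=\ln r+i(\theta+2\pi k)$, producing after multiplication by $(1+ai)/(1+a^2)$ the candidate
\[
z_k=\frac{(1+ai)\bigl(\ln r+i(\theta+2\pi k)\bigr)}{1+a^2}.
\]
The strip condition $-\frac{2\pi a}{1+a^2}<\Re z_k<0$ rearranges to the two-sided inequality
\[
\frac{\ln r-a\theta}{2\pi a}<k<\frac{\ln r-a\theta}{2\pi a}+1,
\]
and since $w_0\notin\Sigma$ the left-hand quantity is not an integer, so exactly one admissible $k$ exists.

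The main obstacle is matching this $k$ with the winding number. Rewriting \eqref{def-j} as ``$J(r,\theta)$ is the smallest integer strictly greater than $\frac{a\theta-\ln r}{2\pi a}$'', a short floor-function argument yields $k=1-J(r,\theta)$; substituting this identity into the real and imaginary parts of $z_k$ reproduces exactly the expressions for $f^{-1}$ stated in the lemma. Once this bookkeeping — aligning the integer produced by inverting the exponential with the sign and offset conventions in \eqref{def-j} — is handled, the proof is complete.
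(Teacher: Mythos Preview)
Your proposal is correct and follows essentially the same route as the paper: both argue via the decomposition $f(x+iy)=e^{x+ay}e^{i(y-ax)}$, reduce the question $f(z)\in\Sigma$ to the diophantine condition $(1+a^{2})x\in 2\pi a\,\Z$, and handle injectivity by bounding the real part of the period $\frac{2\pi i}{1-ai}$ against the strip width. For surjectivity the paper simply writes down the linear system with $J(r,\theta)-1$ already inserted and asserts that the resulting $x$ lies in the strip, whereas you parametrise all logarithms by $k\in\Z$, isolate the unique $k$ via the two-sided inequality, and then identify $k=1-J(r,\theta)$; this is the same computation organised slightly more explicitly.
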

\begin{proof}
To check that the map $f$ takes values in the set $\mathcal{R}$, we suppose on the contrary that $f(z)\in\Sigma$ for some $z = x+iy\in \SS$. Then we have
\begin{align*}
f(z) = e^{x+ay} e^{i(y-ax)} = e^{a\theta}e^{i\theta}
\end{align*}
for some $\theta\in\R$. Hence $x+ay = a\theta$ and $\theta = y-ax - 2k\pi$ for some $k\in\Z$, which in turn implies that
$x+ay = a\theta = a(y-ax - 2k\pi)$. This gives $(1+a^{2})x = - 2ak\pi$, which contradicts the fact that $x+iy\in\SS$.

To show that the map $f$ is injective, we take $z_{1} = x_{1} + iy_{1},z_{2} = x_{2} + iy_{2}\in \SS$ such that
$f(z_{1}) = f(z_{2})$. Then we have
\begin{align*}
e^{x_{1}+ay_{1}} e^{i(y_{1}-ax_{1})} = e^{x_{2}+ay_{2}} e^{i(y_{2}-ax_{2})}
\end{align*}
and consequently $e^{x+ay} e^{i(y-ax)} = 1$, where $x:=x_{1}-x_{2}$ and $y:=y_{1}-y_{2}$. This implies that $x+ay = 0$ and $y-ax = 2k\pi$ for some $k\in\Z$, which in turn gives $x + a(ax+2k\pi) = 0$ and $x=-\tfrac{2k\pi a}{1+a^{2}}$.
On the other hand $$x=x_{1}-x_{2}\in\left(-\tfrac{2\pi a}{1+a^{2}}, \tfrac{2\pi a}{1+a^{2}}\right),$$ which implies that $k=0$. Hence $x=y=0$ and the map $f$ is injective as desired.

To verify that $f$ is a surjective map, we take $z = re^{i\theta}\in\mathcal{R}$, where $r>0$ and $\theta\in\R$, and we are looking for $z=x+iy\in \SS$ such that $f(z) = re^{i\theta}$.
To this end we solve the system of equations
$$x+ay = \ln r, \quad y-ax = \theta - 2\pi(J(r,\theta) - 1),$$ (the factor $J(r,\theta)-1$ is such in order to make sure that the obtained $x\in (-\frac{2\pi a}{1+a^2},0)$, see below) and obtain
\begin{align*}
x =  \frac{\ln r - a(\theta - 2\pi(J(r,\theta) - 1))}{1 + a^{2}}, \quad y = \frac{(\theta - 2\pi(J(r,\theta) - 1)) + a\ln r}{1 + a^{2}}.
\end{align*}
From the definition \eqref{def-j} it follows that $z=x+iy\in \SS$, and hence $f$ is surjective as desired.
\end{proof}
The formula for an inverse $f^{-1}$ in the above lemma shows that $y\rightarrow -\infty$ in the strip frame is equivalent to $z\rightarrow 0$ in the spiral reference frame. 
\begin{cor}
The mapping $f$ satisfies the following limit
\begin{align}\label{lim-f1}
f(x+iy)\to 0 \quad \text{as}  \ \ y\to-\infty,
\end{align}
uniformly with respect to $-\tfrac{2\pi a}{1+a^2}<x<0$. Moreover, we have
\begin{align*}
\mathrm{Im}\, f^{-1}(z)\to -\infty, \quad z\to 0, \ z\in\RR.
\end{align*}
\end{cor}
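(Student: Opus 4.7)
The plan is to reduce both limits to the single identity $|f(x+iy)|=e^{x+ay}$, which follows immediately from
$$f(x+iy)=e^{(1-ai)(x+iy)}=e^{(x+ay)+i(y-ax)}.$$

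For the first statement, I would invoke the strip constraint $-\tfrac{2\pi a}{1+a^2}<x<0$ valid on $\SS$, which gives $|f(x+iy)|=e^{x+ay}<e^{ay}$ for every admissible $x$. Since $a>0$, the right-hand side tends to $0$ as $y\to-\infty$ and is independent of $x$, yielding the claimed uniform convergence.

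For the second statement, I would exploit the relation $f(f^{-1}(z))=z$ proved in Lemma~\ref{mapping}. Writing $f^{-1}(z)=X(z)+iY(z)$, the identity above becomes $|z|=e^{X(z)+aY(z)}$, so $X(z)+aY(z)=\ln|z|$. Since $f^{-1}$ takes values in $\SS$, the real part $X(z)$ lies in the bounded interval $(-\tfrac{2\pi a}{1+a^2},0)$ for every $z\in\RR$. Hence, as $z\to 0$ in $\RR$, $\ln|z|\to-\infty$, and consequently $aY(z)=\ln|z|-X(z)\to-\infty$; dividing by $a>0$ yields $Y(z)\to-\infty$, as required.

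I do not expect any real obstacle: once the modulus identity $|f(x+iy)|=e^{x+ay}$ is in hand, both statements are essentially immediate. The only structural input that is actually used — and this is the genuine content of Lemma~\ref{mapping} — is that the winding-number correction in the explicit formula for $f^{-1}$ is precisely what guarantees that $X(z)$ remains in the bounded fundamental strip. This is exactly the ingredient that forces the blow-up of $\ln|z|$ to be absorbed entirely by $aY(z)$, rather than by drifting in the real direction.
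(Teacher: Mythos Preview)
Your argument is correct. For the first limit your proof is exactly what the paper does (the paper just says it ``follows from the definition of $f$''). For the second limit the paper proceeds slightly differently: it works with the explicit formula for $f^{-1}$ and uses the defining inequality $2\pi J(r,\theta)-\theta>-a^{-1}\ln r$ to bound $\mathrm{Im}\,f^{-1}(z)$ above by $\frac{(a+a^{-1})\ln r+2\pi}{1+a^{2}}$. Your version instead inverts the modulus identity $|z|=e^{X(z)+aY(z)}$ and uses only that $X(z)=\mathrm{Re}\,f^{-1}(z)$ is confined to the bounded interval $(-\tfrac{2\pi a}{1+a^{2}},0)$. The two arguments are equivalent in content---the paper's winding-number inequality is precisely what forces $X(z)$ into the strip---but your formulation is a bit cleaner, since it avoids unpacking the formula for $J(r,\theta)$ and makes transparent that the only structural input needed is the bijectivity statement of Lemma~\ref{mapping}.
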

\begin{proof}
The limit \eqref{lim-f1} follows from definition of the function $f$. To verify the latter, note that from the definition \eqref{def-j}, it follows that
\begin{align*}
2\pi J(r,\theta)  - \theta > -a^{-1}\ln r, \quad r>0, \ \theta\in\R.
\end{align*}
Hence, by Lemma \ref{mapping}, the following holds
\begin{align*}
\mathrm{Im}\, f^{-1}(r,\theta) & = \frac{\theta + a\ln r - 2\pi(J(r,\theta) - 1) }{1 + a^{2}} \\
& < \frac{(a+a^{-1})\ln r + 2\pi }{1 + a^{2}} \to -\infty \quad\text{when} \ \ r\to 0,
\end{align*}
which yields the desired result.
\end{proof}

\begin{rem}\label{pomapping}
Note that the mapping $f$ takes the two vertical lines 
$$\left\{\mathrm{Re}\,z = 0\right\} \quad\text{and}\quad \left\{\mathrm{Re}\,z = -2\pi a/(1+a^2)\right\},$$
which together bound the strip $\SS$, onto the logarithmic spiral $\Sigma$. Moreover, it satisfies the periodicity relation (see Figure \ref{fig:10a})
\begin{align*}
f( z_{1}) = f( z_{2}), \quad  z_{1} =  z_{2} -\frac{2\pi a}{1+a^{2}} + \frac{2\pi i}{1+a^2}, \quad  z_{2}\in i\R.
\end{align*}
Moreover, the point on the spiral is approached from two different sides by different parts of the boundary of the strip $\SS$. \hfill $\square$
\end{rem}

\begin{figure}[h]
    \centering
    \begin{subfigure}{0.45\textwidth}
        \includegraphics[width=\linewidth]{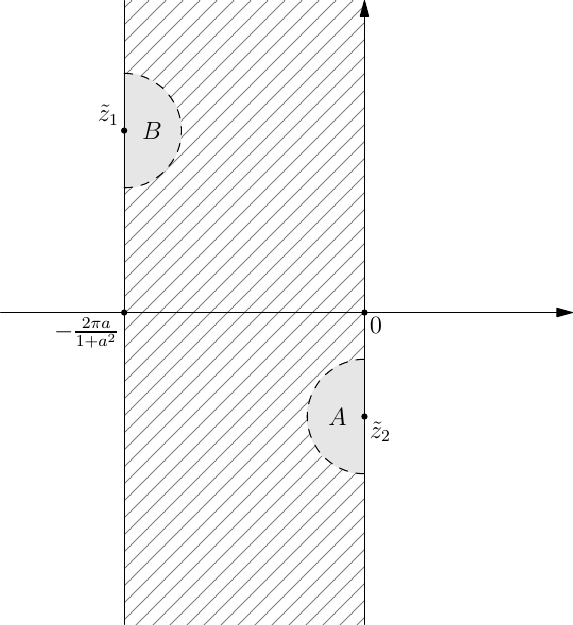}
    \end{subfigure}
    \hfill
    \begin{subfigure}{0.45\textwidth}
        \includegraphics[width=\linewidth]{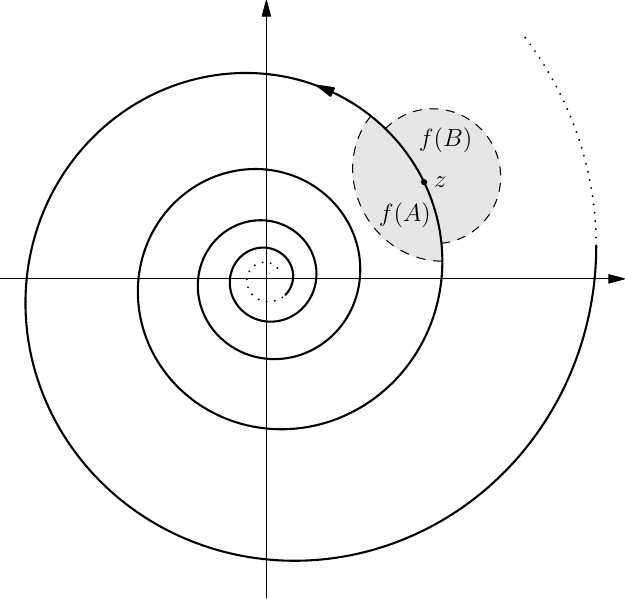}
    \end{subfigure}
\caption{Left: The points $\tilde z_{1} = \tilde z_{2} -\frac{2\pi a}{1+a^{2}} + \frac{2\pi i}{1+a^2}$, $\tilde z_{2}\in i\R$ and the corresponding regions $A$, $B$ located inside the strip $\SS$. Right: Logarithmic spiral with its natural orientation and the images of the regions $A$ and $B$ under the map $f$. Moreover $z=f(\tilde z_{1}) = f(\tilde z_{2})$.}
\label{fig:10a}
\end{figure}

\section{Uniqueness proof for a single spiral}\label{jedna_spirala}
In this section we shall use the holomorphic functions theory to obtain the proof of the uniqueness in Theorem \ref{glowne}. After applying Lemma \ref{mapping}, the problem is reduced to the problem of uniqueness of holomorphic functions on the strip, but a non-standard one. We do not know the full boundary conditions. Moreover, we do not need to assume any decay at an upper infinity of the strip to obtain our result. Instead, we shall carefully analyze the problem using the Schwarz reflection principle and arrive at the uniqueness in a less standard way.

The following straightforward consequence of the Schwarz reflection principle and the identity lemma for holomorphic functions will be useful in what follows.
\begin{prop}\label{holojedno}
Suppose $h$ is a holomorphic function defined on the open vertical strip
\[
S_{x_{0},x_{1}} := \{x + it \,:\, x \in (x_0, x_1), \, t \in \R\},
\]
where $x_0 < x_1$, and assume that $f$ extends continuously to the boundary of $S$. If $h(x_1 + it) = 0$ for all $t \in \R$, then $h \equiv 0$ on the entire strip $S_{x_{0},x_{1}}$.
\end{prop}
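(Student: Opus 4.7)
The plan is to apply the Schwarz reflection principle across the boundary line $L := \{z \in \C \,:\, \mathrm{Re}\, z = x_1\}$ and then invoke the identity theorem. The hypotheses of the proposition — continuity of $h$ up to the boundary of $S_{x_0,x_1}$ and vanishing on $L$ (in particular, taking the real value zero there) — are precisely what the reflection principle requires.

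Concretely, I would introduce the reflected strip $S_{x_1, 2x_1 - x_0}$ and define
\begin{equation*}
H(z) := \begin{cases} h(z), & z \in \overline{S_{x_0, x_1}}, \\ \overline{h(2x_1 - \overline z)}, & z \in \overline{S_{x_1, 2x_1 - x_0}}. \end{cases}
\end{equation*}
Since $h \equiv 0$ on $L$, the two definitions agree there, so $H$ is continuous on $\overline{S_{x_0, 2x_1 - x_0}}$ and holomorphic on each of the two open half-strips. The Schwarz reflection principle then promotes $H$ to a function holomorphic on the whole open strip $S_{x_0, 2x_1 - x_0}$.

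Now $H$ vanishes on $L$, which is a set of accumulation points sitting inside the connected open domain $S_{x_0, 2x_1 - x_0}$. The identity principle for holomorphic functions therefore forces $H \equiv 0$ on the entire enlarged strip, and restricting back to $S_{x_0, x_1}$ yields $h \equiv 0$, as claimed.

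I do not anticipate any serious obstacle: the statement is a clean textbook combination of reflection plus the identity theorem. The only point worth noting is that Schwarz reflection requires real (or purely imaginary) boundary values, and here the hypothesis $h|_L = 0$ makes this automatic; in particular, no growth or decay assumption on $h$ as $\mathrm{Im}\, z \to \pm\infty$ is needed, which is consistent with the author's emphasis in the discussion preceding the proposition.
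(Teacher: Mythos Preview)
Your proposal is correct and matches the paper's proof essentially line for line: the paper also applies the Schwarz reflection principle across $\{x_1 + it : t\in\R\}$ to obtain a holomorphic extension to $S_{x_0,\,2x_1-x_0}$, and then invokes the identity theorem on that enlarged domain. The only cosmetic difference is that you write out the reflected function $H(z)=\overline{h(2x_1-\overline z)}$ explicitly, whereas the paper simply cites Rudin's version of the reflection principle.
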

\begin{proof}
Since $h$ vanishes on the right edge of the strip, i.e., on the line $\{x_1 + it : t \in \R\}$, the Schwarz reflection principle (see \cite[Theorem 11.14]{Rudin}) implies that $f$ can be extended to a holomorphic function on the larger region
\[
\widetilde{S}_{x_{0},x_{1}} := \{x + it \,:\, x \in (x_0, 2x_1 - x_0), \, t \in \R\}.
\]
This extension agrees with $h$ on the original strip and remains holomorphic throughout the extended domain. Since the extended function is holomorphic and vanishes on the nontrivial vertical line $\{x_1 + it : t \in \R\}$ contained in $\widetilde{S}$, the identity theorem (see \cite[Corollary, p.~209]{Rudin}) implies that the extension, and hence $h$ itself, must vanish identically on $S_{x_{0},x_{1}}$.
\end{proof}

\subsection{Velocity field and boundary conditions on the strip}\label{sec-frame}
Let us assume that the velocity field $w$ and its holomorphic potential are given in the spiral reference frame and are connected by the 
relationship $w^{*}(z) = \Phi'(z)$ for $z\in\RR$. By applying the conformal mapping $f$ we obtain the corresponding functions $\tilde w(z)$ and $\tilde\Phi(z)$ defined in the strip reference frame, where 
\begin{equation}\label{transform}
\tilde w^{*}(z) = \tilde \Phi'(z) \quad\text{and}\quad  \tilde\Phi(z) = \Phi(f(z)), \quad z\in\SS.
\end{equation}
Writing the potentials as $\Phi = \Phi_{1} + i\Phi_{2}$ and $\tilde\Phi = \tilde \Phi_{1} + i\tilde \Phi_{2}$, we can calculate the velocity field in the strip $\tilde w^{*} = \tilde w_1 - i\tilde w_2 = \tilde \Phi'$, as
\begin{equation}\label{v1}
\tilde w_{1}(z) = \frac{\partial \tilde\Phi_{1}}{\partial x}(z) = \frac{\partial\Phi_{1}}{\partial x}(f(z)) \frac{\partial}{\partial x}\mathrm{Re}\,f(z) + \frac{\partial \Phi_{1}}{\partial y}(f(z)) \frac{\partial}{\partial x}\mathrm{Im}\,f(z)
\end{equation}
and
\begin{equation}\label{v2}
\tilde w_{2}(z) = \frac{\partial \tilde\Phi_{1}}{\partial y}(z) = \frac{\partial \Phi_{1}}{\partial x}(f(z)) \frac{\partial}{\partial y}\mathrm{Re}\,f(z) + \frac{\partial \Phi_{1}}{\partial y}(f(z)) \frac{\partial}{\partial y}\mathrm{Im}\,f(z).
\end{equation}
Let us observe that by the Cauchy-Riemann equations we have
\begin{equation}
\begin{aligned}\label{eq:Re_du}
\frac{\partial}{\partial y} \mathrm{Im}\,f(z) = \frac{\partial}{\partial x} \mathrm{Re}\,f(z) & = \frac{\partial}{\partial x} [e^{x+ay} \cos(y - ax)] \\
&= e^{x+ay} (\cos(y - ax) + a \sin(y - ax)),
\end{aligned}
\end{equation}
and
\begin{equation}
\begin{aligned}\label{eq:Im_du}
-\frac{\partial}{\partial y} \mathrm{Re}\,f(z) = \frac{\partial}{\partial x} \mathrm{Im}\,f(z) & = \frac{\partial}{\partial x} [e^{x+ay} \sin(y - ax)] \\
&= e^{x+ay} (\sin(y - ax) - a \cos(y - ax)).
\end{aligned}
\end{equation}
Finally, applying \eqref{eq:Re_du} and \eqref{eq:Im_du} in \eqref{v1}, we obtain
\begin{equation}\label{vpierwsza}
\begin{aligned} 
\tilde{w}_1(z)&= w_1(f(z))e^{x+ay}(\cos(y - ax) +a \sin(y - ax))\\
& \quad + w_2(f(z))e^{x+ay} (\sin(y - ax) - a \cos(y - ax)),
\end{aligned}
\end{equation}
while plugging \eqref{eq:Re_du} and \eqref{eq:Im_du} in \eqref{v2}, we arrive at
\begin{equation}\label{vdruga}
\begin{aligned}
\tilde{w}_2(z) &= w_1(f(z))e^{x+ay}\left(a\cos(y-ax)-\sin(y-ax)\right) \\
&\quad +w_2(f(z))e^{x+ay}\left(a\sin(y-ax)+\cos(y-ax)\right).
\end{aligned}
\end{equation}
We have already expressed the velocity in the strip in terms of the velocity in the spiral’s reference frame. Now let us examine how the boundary and decay conditions transform. Combining \eqref{vpierwsza}, \eqref{vdruga}, \eqref{tan-form}, \eqref{norm-form}, we have
\begin{align}\label{eq-bound-1a}
\tilde{w}_{1}(i y) & = e^{ay} \sqrt{a^2+1}  w^{L}(f(iy)) \cdot \vec{n}(f(iy))
\end{align}
and
\begin{align}\label{eq-bound-1b}
\tilde{w}_{2}(i y) & = e^{ay} \sqrt{a^2+1}  w^{L}(f(iy)) \cdot \vec{\tau}(f(iy))
\end{align}
for $y\in\R$. On the other hand, applying \eqref{vpierwsza}, \eqref{tan-form} and \eqref{norm-form} once again, together with the periodicity of $f$ (see Remark \ref{pomapping}), yields
\begin{equation}\label{eq-bound-2a}
\tilde{w}_{1}\left(-\frac{2a\pi}{1+a^2} + i\left(y + \frac{2\pi}{1+a^2}\right)\right) = e^{ay} \sqrt{a^2+1}  w^{R}(f(iy)) \cdot \vec{n}(f(iy))
\end{equation}
and
\begin{equation}\label{eq-bound-2b}
\tilde{w}_{2}\left(-\frac{2a\pi}{1+a^2} + i\left(y + \frac{2\pi}{1+a^2}\right)\right) = e^{ay} \sqrt{a^2+1}  w^{R}(f(iy)) \cdot \vec{\tau}(f(iy))
\end{equation}
for $y\in\R$.
Combining the equations \eqref{skoki-nor}, \eqref{norm-form}, \eqref{vel_match}, and \eqref{eq-bound-1a} we obtain
\begin{equation}
\begin{aligned}\label{eq-36aa}
\tilde{w}_{1}(i y) & = e^{ay} \sqrt{a^2+1}  w^{L}(f(iy)) \cdot \vec{n}(f(iy)) \\
&= e^{ay} \sqrt{a^2+1} \mu e^{(a+i)y} \cdot \vec{n}(f(iy)) \\
&= \mu e^{2ay} (\cos y(\cos y + a\sin y) + \sin y(\sin y - a\cos y)) \\
&= \mu e^{2ay} \quad \text{for} \  y\in\R.
\end{aligned}
\end{equation}
The above equations are not surprising as the formula could be guessed due to the conformality of the map $f$. Similarly, applying the equations \eqref{skoki-nor}, \eqref{norm-form}, \eqref{vel_match}, and \eqref{eq-bound-2a}, we have
\begin{equation}
\begin{aligned}\label{eq-36bb}
& \tilde{w}_{1}\left(-\frac{2a\pi}{1+a^2} + i\left(y + \frac{2\pi}{1+a^2}\right)\right) = e^{ay} \sqrt{a^2+1}  w^{R}(f(iy)) \cdot \vec{n}(f(iy)) \\
&\qquad = e^{ay} \sqrt{a^2+1} \mu e^{(a+i)y} \cdot \vec{n}(f(iy)) = \mu e^{2ay} \quad \text{for} \  y\in\R.
\end{aligned}
\end{equation}
Let us observe that the tangential jump condition \eqref{skoki-tan} together with \eqref{eq-bound-1b} and \eqref{eq-bound-2b}, yields
\begin{equation}
\begin{aligned}\label{eq-39aa}
& \tilde{w}_{2}\left(-\frac{2a\pi}{1+a^2} + i\left(y + \frac{2\pi}{1+a^2}\right)\right) - \tilde{w}_{2}(iy) \\
& \qquad = e^{ay}\sqrt{1+a^2}(w^{R}(f(iy))-w^{L}(f(iy)))\cdot\vec{\tau}(f(iy)) \\
& \qquad = e^{ay}\sqrt{1+a^2}\gamma(f(iy)) \\
& \qquad = 2ag e^{2ay},
\end{aligned}
\end{equation}
where the last equality is a consequence of the formula \eqref{gestosc}. Therefore, from \eqref{eq-36aa}, \eqref{eq-36bb} and \eqref{eq-39aa} it follows that the function $\tilde w$ satisfies the following boundary conditions
\begin{align}\label{eq-36}
& \tilde{w}_{1}(iy) = \mu e^{2ay}, \quad \tilde{w}_{1}\left(-\frac{2a\pi}{1+a^2} + iy\right) = \mu e^{2a(y - \frac{2\pi}{1+a^2})}, \\ \label{eq-39}
& \tilde{w}_{2}\left(-\frac{2a\pi}{1+a^2} + i\left(y + \frac{2\pi}{1+a^2}\right)\right) - \tilde{w}_{2}(iy) = 2ag e^{2ay}
\end{align}
for $y\in\R$. To derive the transformed decay condition, we combine \eqref{decay} with \eqref{lim-f1} and the velocity transformations \eqref{vpierwsza}, \eqref{vdruga} obtaining the limit
\begin{align}\label{v-decay}
\tilde{w}(x+iy)e^{-ay} \to 0, \quad y\to-\infty,
\end{align}
which holds uniformly for  $-2\pi a/(1+a^2)<x<0$. 

Let us observe that \eqref{eq-39} specifies only the difference in $\tilde{w}_2$ between two points, rather than prescribing its absolute value along the boundary. Consequently, the resulting identification problem does not fall into the standard boundary‐value framework.

\subsection{Proof of the uniqueness part of Theorem \ref{glowne}}
Using the conformal transformation $f$, we have just reduced the proof of the uniqueness part of Theorem \ref{glowne} to the following proposition.
\begin{prop}\label{tresc}
Let $\tilde{w}^j = \tilde{w}^{j}_{1}+i\tilde{w}^{j}_{2}$, for $j = 1, 2$, be two antiholomorphic velocities in the strip $\SS$ satisfying the boundary conditions \eqref{eq-36} and the non-standard condition \eqref{eq-39}. Furthermore, assume that for any $j=1,2$ and $-2\pi a/(1+a^2)<x<0$ we have the pointwise limit
\begin{equation}\label{obietnica}
\tilde{w}^j(x+iy)\rightarrow 0\quad \text{as} \ \ y\to -\infty.
\end{equation}
Then $\tilde{w}^1 = \tilde{w}^2$ on $\SS$.
\end{prop}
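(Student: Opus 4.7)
The plan is to form $h:=\tilde w^1-\tilde w^2$ and study its (holomorphic) conjugate $g:=\overline{h}$ on $\SS$. From \eqref{eq-36} we immediately get $\mathrm{Re}\,g\equiv 0$ on both vertical edges of $\SS$, while \eqref{eq-39}, applied to both $\tilde w^j$ and subtracted, cancels the inhomogeneous jump $2age^{2ay}$ and yields the boundary matching
\begin{equation*}
g\Bigl(-\tfrac{2\pi a}{1+a^2}+i\bigl(y+\tfrac{2\pi}{1+a^2}\bigr)\Bigr)=g(iy),\qquad y\in\R.
\end{equation*}

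First, using $\mathrm{Re}\,g=0$ on the edge $\{\mathrm{Re}\,z=0\}$, I apply the Schwarz reflection principle to extend $g$ holomorphically to the wider strip $E:=\{-\tfrac{2\pi a}{1+a^2}<\mathrm{Re}\,z<\tfrac{2\pi a}{1+a^2}\}$ via the relation $g(z)=-\overline{g(-\bar z)}$ on the right half. With $\omega:=-\tfrac{2\pi a}{1+a^2}+\tfrac{2\pi i}{1+a^2}$, the function $F(z):=g(z+\omega)-g(z)$ is holomorphic on the right half of $E$ (the shift by $\omega$ lands this set inside $\SS$, where the original $g$ lives), and by the matching identity above $F(iy)=0$ for every $y\in\R$. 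The left-edge analogue of Proposition \ref{holojedno} then forces $F\equiv 0$, so $g(z+\omega)=g(z)$ there. Rewriting $g(z+\omega)$ via the reflection relation, we arrive at the functional equation
\begin{equation*}
g(w)=-\overline{g(T(w))}\qquad\text{for all }w\in\SS,\qquad T(w):=-\bar w+\bar\omega,
\end{equation*}
and one checks directly that $T$ maps $\SS$ into itself.

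The decisive observation is that $T\circ T$ is a pure vertical translation:
\begin{equation*}
T\bigl(T(w)\bigr)=-\overline{-\bar w+\bar\omega}+\bar\omega=w+\bar\omega-\omega=w-\tfrac{4\pi i}{1+a^2}.
\end{equation*}
Applying the functional equation twice therefore yields
\begin{equation*}
g(w)=-\overline{g(T(w))}=-\overline{-\overline{g(T\circ T(w))}}=g\bigl(w-\tfrac{4\pi i}{1+a^2}\bigr),
\end{equation*}
so $g$ is periodic on $\SS$ in the imaginary direction with period $\tfrac{4\pi}{1+a^2}$. Iterating gives $g(w)=g\bigl(w-\tfrac{4\pi in}{1+a^2}\bigr)$ for every $n\in\N$ and $w\in\SS$; since the real part is preserved and the imaginary part tends to $-\infty$, the pointwise decay \eqref{obietnica} forces $g(w)=0$. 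Hence $\tilde w^1=\tilde w^2$. The main obstacle, and arguably the only clever step, is spotting this pair of symmetries whose composition is a pure vertical translation: the cancellation $\bar\omega-\omega=-\tfrac{4\pi i}{1+a^2}$ is exactly what lets us exploit the decay at $y\to-\infty$ without invoking any Phragm\'en-Lindel\"of growth restriction.
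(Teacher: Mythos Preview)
Your proof is correct, and it rests on the same underlying mechanism as the paper's argument: the antiholomorphic involution $T(w)=-\bar w+\bar\omega$ is exactly the paper's map $\PP_{-}$, and the key identity $T\circ T(w)=w-\tfrac{4\pi i}{1+a^2}$ is the paper's observation \eqref{eq-p2} that $\PP_{-}^{n}(z)=z-\tfrac{2\pi n i}{1+a^2}$ for even $n$. Both arguments then feed the iterated vertical translation into the decay hypothesis \eqref{obietnica}.

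The packaging differs. The paper does not extend the difference across the imaginary axis; instead it builds, for each $j$, the auxiliary holomorphic function $h^{j}(z)=(\tilde w^{j}(z))^{*}+\tilde w_{1}^{j}(\PP_{+}(z))+i\tilde w_{2}^{j}(\PP_{+}(z))$, checks that $h^{j}(iy)$ is the same explicit function of $y$ for $j=1,2$, applies Proposition~\ref{holojedno} to conclude $h^{1}\equiv h^{2}$, and then telescopes the relation $\tilde w_{2}^{j}(\PP_{+}(z))-\tilde w_{2}^{j}(z)=\mathrm{Im}\,h^{j}(z)$ down the orbit of $\PP_{-}$. Your route---Schwarz-reflect $g=\overline{\tilde w^{1}-\tilde w^{2}}$ across $i\R$, deduce $g(z+\omega)=g(z)$ via Proposition~\ref{holojedno}, and unwind the reflection to get exact periodicity of $g$---is a tidier formulation of the same idea: it replaces the telescoping sum by a one-line periodicity statement, at the cost of introducing the reflected extension. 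Neither approach needs a Phragm\'en--Lindel\"of bound, which is the point.
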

\begin{proof}
Let us consider the mappings 
\begin{align*}
\PP_{\pm}(z) = -z^{*} - \frac{2a\pi}{1+a^2} \pm \frac{2\pi i}{1+a^2}, \quad z\in\SS, 
\end{align*}
which are well-defined transformations of the strip $\SS$ onto itself. More precisely, each $\PP_{\pm}$ is constructed as the composition of a reflection with respect to the symmetry axis of the strip $\{\mathrm{Re}\,z = - \tfrac{a\pi}{1+a^2}\}$ followed by a vertical shift by the vector $\pm\tfrac{2\pi i}{1+a^2}$. Moreover, the following equality holds 
\begin{equation}\label{eq-p1}
\PP_{+}(\PP_{-}(z)) = z, \quad z\in\SS
\end{equation}
and 
\begin{equation}\label{eq-p2}
\PP_{-}^{n}(z) = z - \frac{2\pi n i}{1+a^2} \quad \text{ for } z\in\SS\text{ and even } n\ge 1.
\end{equation}
Let us define the auxiliary functions $$g^j(z) := \tilde{w}_{1}^j(\PP_{+}(z)) + i\tilde{w}_{2}^j(\PP_{+}(z)), \quad z\in\SS, \ j=1,2.$$ 
Since the function $(w^{j})^{*}$ is holomorphic, the maps $g^j$ also have this property, as they satisfy the following Cauchy-Riemann equations 
\begin{equation*}
(\mathrm{Re}\, g^{j})_{x}(z) = -\tilde{w}_{1,x}^j(\PP_{+}(z)) = \tilde{w}_{2,y}^j(\PP_{+}(z)) = (\mathrm{Im}\, g^{j})_{y}(z)
\end{equation*}
and
\begin{equation*}
(\mathrm{Re}\, g^{j})_{y}(z) = \tilde{w}_{1,y}^j(\PP_{+}(z)) = \tilde{w}_{2,x}^j(\PP_{+}(z)) = -(\mathrm{Im}\, g^{j})_{x}(z).
\end{equation*}
Then we can define the holomorphic function on the strip $\SS$ as follows
\begin{equation*}
\begin{aligned}
h^j(z) & := (w^j(z))^{*} + g^j(z) \\
& = \tilde{w}_{1}^j(z) + \tilde{w}_{1}^j(\PP_{+}(z)) - i \left(\tilde{w}_{2}^j(z) - \tilde{w}_{2}^j(\PP_{+}(z))\right).
\end{aligned}
\end{equation*}
By \eqref{eq-36} and \eqref{eq-39}, the following boundary conditions are satisfied 
\begin{equation}
\begin{aligned}\label{eq-50}
h^j(iy) &= \tilde{w}_{1}^j(iy) + \tilde{w}_{1}^j\left(-\frac{2a\pi}{1+a^2}+ i\left(y + \frac{2\pi}{1+a^2}\right)\right) \\
&\quad - i \left(\tilde{w}_{2}^j(iy) - \tilde{w}_{2}^j\left(-\frac{2a\pi}{1+a^2}+ iy + \frac{2\pi i}{1+a^2}\right)\right) \\
&= 2\mu e^{2ay} +i 2a g e^{2ay} \quad \text{for} \ \ y\in\R.
\end{aligned}
\end{equation}
Let us now define the function
\begin{equation*}
h(z) := h^1(z) - h^2(z), \quad z\in\SS,
\end{equation*}
which is holomorphic on $\SS$ and continuous up to the boundary $\partial\SS$. Moreover, by \eqref{eq-50}, it is known that $h(iy) = 0$ for $y\in\R$. By applying the Schwarz reflection principle (see Proposition \ref{holojedno}), we conclude that $h^1(z) - h^2(z) = h(z)=0$ for $z\in\SS$. 

To prove the uniqueness of the velocity field, let us note that
\begin{equation}\label{formula-1}
\tilde{w}_{2}^j(\PP_{+}(z)) - \tilde{w}_{2}^j(z) = \mathrm{Im}\,h^{j}(z), \quad j=1,2, \ z\in\SS. 
\end{equation}
Applying the mappings $\PP_{-}, \PP_{-}^{2}, \ldots, \PP_{-}^{n}$ to \eqref{formula-1}, where $n\ge 1$ is an even number, and using \eqref{eq-p1}, we obtain the following sequence of equations
\begin{equation}
\begin{aligned}\label{formula-2}
\tilde{w}_{2}^j(z) - \tilde{w}_{2}^j(\PP_{-}(z)) & = \mathrm{Im}\, h^j(\PP_{-}(z)), \\
\tilde{w}_{2}^j(\PP_{-}(z)) - \tilde{w}_{2}^j(\PP_{-}^{2}(z)) & = \mathrm{Im}\, h^j(\PP_{-}^{2}(z)), \\
& \vdots \\
\tilde{w}_{2}^j(\PP_{-}^{n-1}(z)) - \tilde{w}_{2}^j(\PP_{-}^{n}(z)) & = \mathrm{Im}\, h^j(\PP_{-}^{n}(z)).
\end{aligned}
\end{equation}
Adding the formulas \eqref{formula-2} side-by-side and using \eqref{eq-p2}, we obtain
\begin{equation*}
\begin{aligned}
\tilde{w}_{2}^j(z) & = \tilde{w}_{2}^j(\PP_{-}^{n}(z)) + \sum_{k=1}^{n}\mathrm{Im}\, h^j(\PP_{-}^{k}(z)) \\
& = \tilde{w}_{2}^j\left(z - \frac{2\pi n i}{1+a^2}\right) + \sum_{k=1}^{n}\mathrm{Im}\, h^j(\PP_{-}^{k}(z))
\end{aligned}
\end{equation*}
for $j=1,2$ and even $n\ge 1$. Passing in the above equation to the limit with $n\to \infty$ and using the assumption \eqref{obietnica}, we conclude that 
\begin{align*}
\tilde{w}_{2}^j(z) = \sum_{k=0}^{\infty}\mathrm{Im}\, h^j(\PP_{-}^{k}(z)), \quad j=1,2, \ z\in\SS.
\end{align*}
Recalling that $h^1(z) = h^2(z)$ for $z\in\SS$, we obtain
\begin{align*}
\tilde{w}_{2}^1(z) & = \sum_{k=0}^{\infty}\mathrm{Im}\, h^1(\PP_{-}^{k}(z)) = \sum_{k=0}^{\infty}\mathrm{Im}\, h^2(\PP_{-}^{k}(z)) = \tilde{w}_{2}^2(z), \quad z\in\SS.
\end{align*}
It remains to show that the real parts of the velocity fields $\tilde w^{j}$ coincide. Since the function $(\tilde{w}^{1}-\tilde{w}^{2})^{*}$ is holomorphic on the strip $\SS$ and its imaginary part vanishes, from the Cauchy-Riemann equation we obtain $$(\tilde{w}^{1}_{1})_{y}(z) = (\tilde{w}^{2}_{1})_{y}(z), \quad z\in\SS,$$ which together with \eqref{obietnica} gives
$\tilde{w}^{1}_{1} = \tilde{w}^{2}_{1}$ on $\SS$ as desired. Thus the proof of the proposition is completed. 
\end{proof}
The following result will be useful in Section \ref{wiele_spiral}.
\begin{cor}\label{cor-eq-zero}
Let $\tilde{w} = \tilde{w}_{1}+i\tilde{w}_{2}$, be an antiholomorphic velocity field in the strip $\SS$ satisfying the boundary conditions 
\begin{gather}
\tilde{w}_{1}(iy) = \tilde{w}_{1}\left(-\frac{2a\pi}{1+a^2} + iy\right) = 0, \\ 
\tilde{w}_{2}(iy) = \tilde{w}_{2}\left(-\frac{2a\pi}{1+a^2} + i\left(y + \frac{2\pi}{1+a^2}\right)\right)
\end{gather}
for $y\in\R$. Furthermore, assume that for any $-2\pi a/(1+a^2)<x<0$ we have the pointwise limit
\begin{equation*}
\tilde{w}(x+iy)\rightarrow 0\quad \text{as} \ \ y\to -\infty.
\end{equation*}
Then $\tilde{w} = 0$ on $\SS$.
\end{cor}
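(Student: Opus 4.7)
The plan is to observe that Corollary \ref{cor-eq-zero} is precisely the homogeneous case $\mu=0,\ g=0$ of Proposition \ref{tresc}, and to reduce it to that proposition by setting $\tilde w^{1}:=\tilde w$ and $\tilde w^{2}\equiv 0$.

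First I would verify that the identically zero function qualifies as one of the two admissible fields in Proposition \ref{tresc}: it is (trivially) antiholomorphic on $\SS$, it satisfies \eqref{eq-36} and \eqref{eq-39} trivially with $\mu=g=0$, and it obviously fulfils the decay assumption \eqref{obietnica}. Next I would rewrite the hypotheses on $\tilde w$ in the language of \eqref{eq-36}--\eqref{eq-39}. The two equalities $\tilde w_{1}(iy)=0$ and $\tilde w_{1}(-\tfrac{2a\pi}{1+a^{2}}+iy)=0$ are exactly \eqref{eq-36} with $\mu=0$, while the symmetry
\[
\tilde w_{2}(iy) = \tilde w_{2}\!\left(-\frac{2a\pi}{1+a^{2}}+i\!\left(y+\frac{2\pi}{1+a^{2}}\right)\right)
\]
rearranges to the vanishing of the jump in \eqref{eq-39} with $g=0$. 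The decay hypothesis of the corollary is literally \eqref{obietnica}.

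With both $\tilde w^{1}$ and $\tilde w^{2}$ now lying within the scope of Proposition \ref{tresc} for the common parameter choice $\mu=g=0$, that proposition yields $\tilde w = \tilde w^{1} = \tilde w^{2}\equiv 0$ on $\SS$, which is the desired conclusion. I do not anticipate any real obstacle: the genuine content of the argument — the Schwarz reflection step, the iteration of the symmetries $\PP_{\pm}$, and the telescoping reconstruction of $\tilde w_{2}$ from its boundary jump — has already been carried out in the proof of Proposition \ref{tresc}. The only care required is the cosmetic matching of the corollary's homogeneous boundary data with the zero right-hand sides in \eqref{eq-36}--\eqref{eq-39}; everything else is inherited.
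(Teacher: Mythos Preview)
Your proposal is correct and matches the paper's own proof essentially verbatim: the paper also sets $\tilde w^{1}:=\tilde w$, $\tilde w^{2}:=0$, notes that both satisfy \eqref{eq-36}--\eqref{eq-39} with $\mu=g=0$, and invokes Proposition~\ref{tresc}. Your additional remarks about verifying the hypotheses are fine but not strictly needed, since the matching is immediate.
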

\begin{proof}
Let us observe that the functions $\tilde w^{1}:= \tilde w$ and $\tilde w^{2}:=0$ satisfy the conditions \eqref{eq-36} and \eqref{eq-39} with the parameters $ \mu =  g = 0$. Then Proposition \ref{tresc} implies that $\tilde{w}^1 = \tilde w^{2} = 0$ on $\SS$ as desired.
\end{proof}

\section{Determination of the velocity in the strip}\label{sec-4}
In the next section, we will present an alternative derivation of the velocity profile \eqref{form-w} for the Prandtl spiral, complementing the approach in \cite{CKO}. Our procedure is as follows. We use the conformal map from Lemma \ref{mapping} to transfer the problem to the strip 
$\SS$. Then, we compute the velocity field on $\SS$ and we return to the original spiral coordinates. We shall verify that this second method reproduces the formula \eqref{form-w} obtained in \cite{CKO} by the contour integration method. The present section is devoted to determining the velocity in the strip. Moreover, as a by-product, we also provide an explicit formula for the function $h^{j}$ used in the proof of Theorem \ref{glowne}.

In this section, we additionally assume that the parameters of the spiral satisfy the condition (see \cite[eq. (1.18)]{CKO} or \cite{Elling})
\begin{equation}\label{eq-40}
a^2 + 1 - 2\mu + 2a\mu i = -2a^2 g \coth(\pi A). 
\end{equation}
It is known (see \cite{CKO}) that there exist values $g\in\R\setminus\{0\}$ and $\mu\in\R$ for which \eqref{eq-40} holds, provided $a>0$ is sufficiently large. In this case, Prandtl's spiral is a solution to the 2D Euler equations. 
\begin{prop}\label{wpasie}
Let $w$ be a two-dimensional weakly divergence-free velocity field satisfying \eqref{decay}, such that $\curl w=\gamma\delta_{\Sigma}$ in the sense of distributions. If the condition \eqref{eq-40} is satisfied, then the corresponding velocity field $\tilde{w}=\tilde{w}_1+i\tilde{w}_2$ in the strip $\SS$ is uniquely determined by the formulas
\begin{equation}\label{w-explicit}
\begin{aligned}
\tilde{w}_1(x,y)& =\mu (C_{a}\sin(2ax) + \cos(2ax) )e^{2ay}, \\
\tilde{w}_2(x,y)& =\mu (\sin(2ax) - C_{a} \cos(2ax) ) e^{2ay}
\end{aligned}
\end{equation}
for $(x,y)\in\SS$, where we define
\begin{align*}
C_{a}:=\frac{\cos\left(\frac{4a^2\pi}{1+a^2}\right) - e^{-\frac{4a\pi}{1+a^2}}}{\sin\left(\frac{4a^2\pi}{1+a^2}\right)}.
\end{align*}
\end{prop}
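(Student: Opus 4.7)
The plan is to combine the uniqueness result of Proposition~\ref{tresc} with a direct verification. As derived in Section~\ref{sec-frame}, any velocity field $\tilde w$ on $\SS$ coming from such a $w$ is antiholomorphic (meaning $\tilde w^{*}$ is holomorphic), satisfies the boundary equalities \eqref{eq-36}, the jump relation \eqref{eq-39}, and the decay condition \eqref{v-decay}. Since Proposition~\ref{tresc} guarantees at most one such field, it suffices to check that the right-hand side of \eqref{w-explicit} also enjoys all of these properties; uniqueness then forces $\tilde w$ to equal this explicit expression.

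The easy verifications come first. Computing the partial derivatives shows that $\tilde w_{1}-i\tilde w_{2}$ satisfies the Cauchy--Riemann equations on all of $\C$, so $\tilde w^{*}$ is holomorphic on $\SS$. The decay \eqref{v-decay} is immediate from the exponential factor $e^{2ay}$. Writing $\alpha:=4a^{2}\pi/(1+a^{2})$ and $\beta:=4a\pi/(1+a^{2})$, the first boundary condition in \eqref{eq-36} at $x=0$ reduces to $\tilde w_{1}(0,y)=\mu e^{2ay}$, which is clear. The second one, at $x=-2a\pi/(1+a^{2})$, gives $\mu(-C_{a}\sin\alpha+\cos\alpha)e^{2ay}$ and collapses to $\mu e^{-\beta}e^{2ay}=\mu e^{2a(y-2\pi/(1+a^{2}))}$ by the very definition $C_{a}\sin\alpha=\cos\alpha-e^{-\beta}$ of the constant $C_{a}$.

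The principal obstacle is the jump condition \eqref{eq-39}. A direct computation in the same notation, together with $\sin^{2}\alpha+\cos^{2}\alpha=1$, yields
\[
\tilde w_{2}\Bigl(-\tfrac{2a\pi}{1+a^{2}}+i\bigl(y+\tfrac{2\pi}{1+a^{2}}\bigr)\Bigr)-\tilde w_{2}(iy)=\frac{\mu(2\cos\alpha-e^{\beta}-e^{-\beta})}{\sin\alpha}\,e^{2ay}.
\]
Demanding equality with $2age^{2ay}$ is precisely the pressure matching condition \eqref{im_Pr}. The remaining step, and the technical heart of the argument, is to derive \eqref{im_Pr} from the assumption \eqref{eq-40}. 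Using $\pi A=-\beta/2-i\alpha/2$, one expands $\coth(\pi A)$ via standard hyperbolic--trigonometric half-angle identities; separating real and imaginary parts in \eqref{eq-40} shows that the imaginary part is exactly \eqref{im_Pr}. With this identity in hand, the candidate \eqref{w-explicit} fulfills all the hypotheses of Proposition~\ref{tresc}, which forces $\tilde w$ to coincide with it and concludes the proof.
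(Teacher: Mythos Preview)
Your argument is correct and follows the same logical route as the paper: verify that the explicit candidate \eqref{w-explicit} is antiholomorphic and satisfies \eqref{eq-36}, \eqref{eq-39}, \eqref{v-decay}, then invoke Proposition~\ref{tresc} for uniqueness, with the imaginary part of \eqref{eq-40} supplying the identity \eqref{im_Pr} needed for the jump condition. The paper differs only cosmetically in that it first \emph{derives} the formula via the harmonic ansatz $\tilde w_1(x,y)=k(x)e^{2ay}$ and the boundary data \eqref{eq-36}; since the statement only asks to identify $\tilde w$, your direct verification is equally valid. One point you should make explicit: when you appeal to Section~\ref{sec-frame} for \eqref{eq-36}, that derivation uses the velocity matching condition \eqref{vel_match}, which is not among the stated hypotheses of the proposition---the paper closes this by recalling (from \cite{CKO}) that the \emph{real} part of \eqref{eq-40} is precisely \eqref{vel_match}, so both halves of \eqref{eq-40} are actually used. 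Add that remark and your proof matches the paper's in completeness.
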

\begin{proof}
We begin by showing how to derive the formulas \eqref{w-explicit}. Let us recall from \cite{CKO} that the real part of \eqref{eq-40} is precisely the velocity matching condition \eqref{vel_match}. Hence, by results of Section \ref{sec-frame}, the transformed velocity field $\tilde w$ given by \eqref{transform} satisfies on the strip $\SS$ the boundary conditions \eqref{eq-36}, \eqref{eq-39} and the decay \eqref{v-decay}. We postulate that $\tilde{w}_{1}(x,y) = k(x)e^{2ay}$ for $(x,y)\in\SS$. Since $\tilde{w}_{1}$ is a harmonic function, we obtain
\begin{equation}\label{eq-72}
k''(x) + 4a^2k(x) = 0, \quad x\in\left(-\frac{2\pi a}{1+a^2}, 0\right).
\end{equation}
Moreover, using \eqref{eq-36}, we determine the boundary conditions
\begin{equation}\label{eq-73}
\quad k(0) = \mu \quad\text{and}\quad k\left(-\frac{2a\pi}{1+a^2}\right) = \mu e^{-\frac{4a\pi}{1+a^2}}.
\end{equation}
The equation \eqref{eq-72} admits the general solution
\begin{equation*}
k(x) = A_{1} \sin(2ax) + A_{2} \cos(2ax),
\end{equation*}
which, after applying the boundary conditions \eqref{eq-73}, gives $A_{2}=\mu$, and
\begin{equation*}
A_{1} \sin\left(\frac{4a^2\pi}{1+a^2}\right) = \mu \left( \cos\left(\frac{4a^2\pi}{1+a^2}\right) - e^{-\frac{4a\pi}{1+a^2}} \right).
\end{equation*}
Substituting the obtained coefficients back into the expression for $k(x)$ yields
\begin{equation*}
\tilde{w}_{1}(x,y) = \mu \left( C_{a}  \sin(2ax) + \cos(2ax) \right)e^{2ay}, \quad (x,y)\in\SS.
\end{equation*}
Since the function $\tilde w^{*} = \tilde{w}_{1} - i\tilde{w}_{2}$ is holomorphic on the strip, from the Cauchy-Riemann equations we obtain
\begin{equation*}
(\tilde{w}_{2})_y(x,y) = -(\tilde{w}_{1})_x(x,y) = 2a\mu\left(\sin(2ax)-C_{a}\cos(2ax)\right)e^{2ay},
\end{equation*}
which together with the decay \eqref{obietnica} gives
\begin{equation*}
\tilde{w}_{2}(x,y) = \mu \left( \sin(2ax) - C_{a} \cos(2ax) \right) e^{2ay}, \quad (x,y)\in\SS.
\end{equation*}
By Proposition \ref{tresc}, it remains to check that the velocity field $\tilde w$ given by the formulas \eqref{w-explicit} satisfies the boundary conditions \eqref{eq-36}, \eqref{eq-39} and the decay \eqref{v-decay}. Let us observe that, by the construction of the function, $w$ the conditions \eqref{eq-36} and \eqref{v-decay} are satisfied. Moreover, we have
\[
\tilde{w}_{2}(iy) = -\mu C_{a}e^{2ay}, \quad y\in\R
\]
and 
\begin{equation*}
\begin{aligned}
& \tilde{w}_{2}\left(-\frac{2a\pi}{1+a^2} + iy\right) = -\mu \left( C_{a}\cos\left(-\frac{4a^2\pi}{1+a^2}\right) - \sin\left(-\frac{4a^2\pi}{1+a^2}\right) \right)e^{2ay} \\
& \qquad = -\mu \frac{1 - e^{-\frac{4a\pi}{1+a^2}} \cos\left(\frac{4a^2\pi}{1+a^2}\right)}{\sin\left(\frac{4a^2\pi}{1+a^2}\right)} e^{2ay}, \quad y\in\R.
\end{aligned}
\end{equation*}
Subtracting the above equations gives
\begin{equation}
\begin{aligned}\label{eq-roz}
& \tilde{w}_{2}\left(-\frac{2a\pi}{1+a^2} + i\left(y + \frac{2\pi}{1+a^2}\right)\right) - w_{2}(iy)  \\
& \qquad = -\mu \frac{e^{\frac{4a\pi}{1+a^2}} -2\cos\left(\frac{4a^2\pi}{1+a^2}\right) + e^{-\frac{4a\pi}{1+a^2}}}{\sin\left(\frac{4a^2\pi}{1+a^2}\right)}e^{2ay},
\end{aligned}
\end{equation}
On the other hand, it is not difficult to check that the imaginary part of the equation \eqref{eq-40} takes the form
\begin{equation*}
2ag = \mu\frac{2\cos\left(\frac{4\pi a^2}{1+a^2}\right) - e^{\frac{-4\pi a}{1+a^2}} - e^{\frac{4\pi a}{1+a^2}}}{\sin\left(\frac{4\pi a^2}{1+a^2}\right)},
\end{equation*}
which, together with \eqref{eq-roz}, shows that $\tilde w_{2}$ satisfies the condition \eqref{eq-39}, as required. Thus the proof of the proposition is completed. 
\end{proof}
\begin{rem}
As we see in the proof of Proposition \ref{wpasie}, $\tilde{w}_1$ is a holomorphic function in the strip $\SS$. One could be tempted to obtain its uniqueness (and hence the uniqueness of $\tilde{w}$) by applying the results of Widder \cite{Widder} concerning the harmonic functions in the strip. We encourage the reader to check that, in this case, uniqueness holds in the class of functions satisfying the decay of the form $O(e^{(a+1/a)|y|})$ as $|y|\rightarrow \infty$. This means that the uniqueness result could be obtained only for $a\leq 1$, which is an essentially weaker result than the one we present. \hfill $\square$
\end{rem}
\begin{rem}
It can be verified that under the condition \eqref{eq-40}, the functions $h^{1}=h^{2}$ from the proof of Theorem \ref{glowne} can be expressed explicitly as
\begin{equation*}
\begin{aligned}
h^{1}(z) &= \mu \!\left( \frac{2 \cos\left(\frac{4a^2\pi}{1+a^2}\right) \!-\!e^{-\frac{4a\pi}{1+a^2}} \!-\! e^{\frac{4a\pi}{1+a^2}}}{\sin\left(\frac{4a^2\pi}{1+a^2}\right)} \sin(2ax) \!+\! 2 \cos(2ax)\right) \!e^{2ay} \\
& \quad +i \mu \!\left( \frac{2 \cos\left(\frac{4a^2\pi}{1+a^2}\right) \!-\! e^{-\frac{4a\pi}{1+a^2}} \!-\! e^{\frac{4a\pi}{1+a^2}}}{\sin\left(\frac{4a^2\pi}{1+a^2}\right)}  \cos(2ax) \!-\! 2 \sin(2ax)\right) \!e^{2ay}
\end{aligned}
\end{equation*}
for $z = x+iy\in\SS$. \hfill $\square$
\end{rem}

\section{Alternative way of finding the formula for the velocity}\label{sec-velocity}
This section is devoted to the proof of the last assertion of Theorem \ref{glowne}. Under the assumption \eqref{im_Pr}, we check that the expression for $\tilde{w}$ yields exactly the same formula for the profile $w$ as in \cite{CKO}. To this end, we apply the inverse of $f$ from Lemma \ref{mapping} to the complex potential $\tilde\Phi=\tilde\Phi_{1} + i\tilde\Phi_{2}$ of the vector field $\tilde{w}$ that we derived in Proposition \ref{wpasie}. In this case
\begin{align*}
\tilde\Phi_{1}(z) & = \mu\left(-C_{a}\frac{\cos(2ax)}{2a} + \frac{\sin(2ax)}{2a}\right)e^{2ay},
\end{align*}
and
\begin{align*}
\tilde\Phi_{2}(z) & = \mu\left(C_{a}\frac{\sin(2ax)}{2a} + \frac{\cos(2ax)}{2a}\right)e^{2ay}
\end{align*}
for $z=x+iy\in\SS$. Consequently, the complex potential is
\begin{equation}\label{eq-ph}
\begin{aligned}
\tilde \Phi(z) & = -\frac{\mu}{2a}\left(\frac{\cos\left(\frac{4a^{2}\pi}{1+a^{2}}\right) - e^{-\frac{4a\pi}{1+a^{2}}}}{\sin\left(\frac{4a^{2}\pi}{1+a^{2}}\right)} -
i\right)e^{-2axi}e^{2ay} \\
& = \frac{\mu}{2a}\left(\frac{e^{-\frac{4a\pi}{1+a^{2}}} - e^{-\frac{4a^{2}\pi}{1+a^{2}} i}}{\sin\left(\frac{4a^{2}\pi}{1+a^{2}}\right)}\right)e^{-2aiz}.
\end{aligned}
\end{equation}
Using the explicit formula for $f^{-1}$ from Lemma \ref{mapping}, we obtain
\begin{equation}\label{eq-inv-1}
\begin{aligned}
-2aif^{-1}(z) & = iA(\ln r + i(\theta - 2\pi(J(r,\theta) - 1))) \\
& = iA(\ln r + i(\theta - 2\pi J(r,\theta))) - 2\pi A,
\end{aligned}
\end{equation}
where we recall that $A = -2ai/(a+i)$. Combining \eqref{eq-ph} and \eqref{eq-inv-1}, gives
\begin{equation}\label{eq-ph-inv}
\begin{aligned}
\tilde\Phi(f^{-1}(z)) & = \frac{\mu}{2a}\left(\frac{e^{-\frac{4a\pi}{1+a^{2}}} - e^{-\frac{4a^{2}\pi}{1+a^{2}} i}}{\sin\left(\frac{4a^{2}\pi}{1+a^{2}}\right)}\right)e^{-2ai f^{-1}(z)} \\
& = \frac{\mu}{2a}\left(\frac{e^{-\frac{4a\pi}{1+a^{2}}} - e^{-\frac{4a^{2}\pi}{1+a^{2}} i}}{\sin\left(\frac{4a^{2}\pi}{1+a^{2}}\right)}\right) e^{-2\pi A}
e^{iA(\ln r + i(\theta - 2\pi J(r,\theta)))}
\end{aligned}
\end{equation}
for $z=re^{i\theta}\in \RR$. Then the following calculation shows
\begin{align*}
& \frac{e^{2\pi A}}{1-e^{2\pi A}} = \frac{e^{\pi A}}{e^{-\pi A}-e^{\pi A}} = -\frac{e^{\frac{-2\pi a}{1+a^{2}}(1+ai)}}{e^{\frac{-2\pi a}{1+a^{2}}(1+ai)} - e^{\frac{2\pi a}{1+a^{2}}(1+ai)}} \\
& \quad = -\frac{e^{\frac{-4\pi a}{1+a^{2}}} - e^{\frac{-4\pi a^{2}}{1+a^{2}}i}}{e^{\frac{-4\pi a}{1+a^{2}}} + e^{\frac{4\pi a}{1+a^{2}}} - 2\cos\left(\frac{4\pi a^{2}}{1+a^{2}}\right)}
 = \frac{e^{\frac{-4\pi a}{1+a^{2}}} - e^{\frac{-4\pi a^{2}}{1+a^{2}}i}}{2\cos\left(\frac{4\pi a^{2}}{1+a^{2}}\right) - e^{\frac{-4\pi a}{1+a^{2}}} - e^{\frac{4\pi a}{1+a^{2}}}}.
\end{align*}
Substituting the above equality in \eqref{im_Pr}, we infer that
\begin{align*}
\frac{e^{2\pi A}}{1-e^{2\pi A}} & = \frac{e^{\frac{-4\pi a}{1+a^{2}}} - e^{\frac{-4\pi a^{2}}{1+a^{2}}i}}{2\cos\left(\frac{4\pi a^{2}}{1+a^{2}}\right) - e^{\frac{-4\pi a}{1+a^{2}}} - e^{\frac{4\pi a}{1+a^{2}}}}
 = \frac{\mu}{2ag}\frac{e^{\frac{-4\pi a}{1+a^{2}}} - e^{\frac{-4\pi a^{2}}{1+a^{2}}i}}{\sin\left(\frac{4\pi a^2}{1+a^2}\right)}
\end{align*}
and consequently
\begin{align}\label{eq-11aa}
\frac{\mu}{2a}\left(\frac{e^{-\frac{4a\pi}{1+a^{2}}} - e^{-\frac{4a^{2}\pi}{1+a^{2}} i}}{\sin\left(\frac{4a^{2}\pi}{1+a^{2}}\right)}\right) e^{-2\pi A}
=\frac{g}{1-e^{2\pi A}}.
\end{align}
Combining \eqref{eq-11aa} and \eqref{eq-ph-inv} yields
\begin{align*}
\tilde \Phi(f^{-1}(z)) = \frac{g}{1-e^{2\pi A}}e^{iA(\ln r + i(\theta - 2\pi J(r,\theta)))}, \quad z=re^{i\theta}\in\RR,
\end{align*}
which coincides exactly with the potential given in \cite[eq. (1.31)]{CKO} for $M=1$ and $\theta_0=0$. It therefore reproduces the potential and velocity field $w$ as in \eqref{eq-phi} and \eqref{form-w}, respectively, completing the proof of Theorem \ref{glowne}.  \hfill $\square$

\section{Uniqueness for the family of logarithmic spirals}\label{wiele_spiral}
In this section, we consider flows whose vorticity is supported on a family of $M\ge 2$ concentric logarithmic spirals, described by the following equations
\begin{equation*}
Z_m(\theta , t) = t^\mu e^{a (\theta - \theta_m )} e^{i\theta }, \quad 
\Gamma_m (\theta ,t) = g_m t^{2\mu -1}  e^{2a (\theta - \theta_m )}, \quad \theta \in \R, \ t>0,
\end{equation*}
where $Z_m(\theta , t)$ represents the parametrization of the $m$-th spiral with respect to the angular parameter and $\Gamma_m (\theta , t)$ denotes the circulation function. In the above formulas, the quantities involved are such that
\begin{equation}\label{log_spirals_parameters}
a>0, \quad \mu \in \R , \quad g_m \in \R\setminus\{0\}, \quad \theta_m \in [0,2\pi ),
\end{equation}
where $0\le m \le M-1$. Without loss of generality, we assume that
\[
0 = \theta_0 < \theta_1 < \ldots < \theta_{M-1} < 2\pi. 
\]
Let $\Sigma_{m}(t)$ be the spiral parameterized by the map $Z_{m}(\theta,t)$ and define $\Sigma_{F}(t) := \Sigma_{0}(t)\cup\ldots\cup\Sigma_{M-1}(t)$. Throughout the remainder of this paper, we focus on the self-similar velocity field given by 
\begin{equation*}
v(z,t)=t^\mu w(z/t^\mu), \quad z\in \RR_{F}(t) := \C\setminus(\Sigma_{F}(t)\cup\{0\})
\end{equation*}
such that the profile vector field satisfies, on $\Sigma_{m} := \Sigma_{m}(1)$ for $0\le m \le M-1$, the velocity matching condition 
\begin{equation}\label{vel_match-m}
\vec{n}(z)\cdot\left(w(z)-\mu z\right)=0, \quad z\in \Sigma_{m},
\end{equation}
the continuity of normal components  
\begin{equation}\label{skoki-nor-m}
(w^R(z)-w^L(z))\cdot \vec{n}(z) = 0, \quad  z\in \Sigma_{m}, 
\end{equation}
as well as the tangential jump conditions
\begin{equation}\label{skoki-tan-m}
(w^R(z)-w^L(z))\cdot\vec{\tau}(z) = \gamma_{m}(z), \quad z\in \Sigma_{m}, 
\end{equation}
where the density function $\gamma_{m}:\Sigma_{m}\to\R$ is given by the formula 
\begin{equation}\label{gestosc2bb-m}
\gamma_{m}(Z_{m}(\theta)):=\frac{\partial_\theta \Gamma_{m}(\theta)}{|\partial_\theta Z_{m}(\theta)|} =
\frac{2ag_{m}}{\sqrt{1+a^{2}}} e^{a(\theta-\theta_{m})}, \quad \theta\in\R.
\end{equation}
Moreover, we assume that $w$ satisfies the following decay
\begin{equation}\label{decay-m}
|w(z)| \lesssim |z|, \quad z\to 0, \ z\in\RR_{F}:=\RR_{F}(1).
\end{equation}
Observe that, by expressing the unit tangent and normal vectors on the spiral $\Sigma_{m}$, we obtain
\begin{align}\label{vec-tan-norm}
\vec{\tau}(Z_{m}(\theta)) = \frac{a+i}{\sqrt{a^2+1}} e^{i\theta} \quad\text{and}\quad \vec{n}(Z_{m}(\theta))=\frac{1-ia}{\sqrt{a^2+1}} e^{i\theta} \quad\text{for} \ \ \theta\in\R.
\end{align}
Let us define the lines
\begin{align*}
\ell_{m} := \left\{\mathrm{Re}\, z = -\frac{a\theta_{m}}{1+a^{2}}\right\}, \quad 0\le m \le M,
\end{align*}
where we set $\theta_{M}:=2\pi$. It is not difficult to check that 
\begin{align}\label{eq-f-spiral}
f\left(-\frac{a\theta_{m}}{1+a^{2}} + iy\right) 
= e^{a(y+\frac{a^{2}\theta_{m}}{1+a^{2}} - \theta_{m})} e^{i(y+\frac{a^{2}\theta_{m}}{1+a^{2}})}, \quad y\in \R,
\end{align}
which gives $f(\ell_{m})=\Sigma_{m}$ for $0\le m\le M-1$. Hence, $f$ is a biholomorphic map between $\SS_{F}:=\SS\setminus(\ell_{1}\cup\ldots\cup\ell_{M-1})$ and $\RR_{F}$. If we consider the function 
\begin{align*}
f_{m}(z) := f\left(-\frac{a\theta_{m}}{1+a^{2}} + z\right), \quad z\in i\R,
\end{align*}
then the equation \eqref{eq-f-spiral}, gives
\begin{align*}
f_{m}(iy) = Z_{m}\left(y+\frac{a^{2}\theta_{m}}{1+a^{2}}\right), \quad y\in\R
\end{align*}
and consequently, by \eqref{gestosc2bb-m}, we have
\begin{align}\label{cond-gamma-m}
\gamma(f_{m}(iy)) = \frac{2ag_{m}}{\sqrt{1+a^{2}}} e^{a(y + \frac{a^{2}\theta_{m}}{1+a^{2}} - \theta_{m})} = 
\frac{2ag_{m}}{\sqrt{1+a^{2}}} e^{a(y - \frac{\theta_{m}}{1+a^{2}})}, \quad y\in\R.
\end{align}
On the other hand, from \eqref{vec-tan-norm} it follows that
\begin{align*}
n(f_{m}(iy)) = \frac{1-ia}{\sqrt{a^2+1}} e^{i(y+\frac{a^{2}\theta_{m}}{1+a^{2}})}, \quad y\in\R,
\end{align*}
which together with the condition \eqref{vel_match-m} and the equation \eqref{vpierwsza}, gives
\begin{equation}
\begin{aligned}\label{eq-bound-11}
\tilde{w}_{1}^{L}\left(-\frac{a\theta_{m}}{1+a^{2}} + i y\right) & = e^{-\frac{a\theta_{m}}{1+a^{2}} + a y}\sqrt{a^2+1}w^{L}(f_{m}(iy))\cdot \vec{n}(f_{m}(iy)) \\
& = \mu e^{-\frac{a\theta_{m}}{1+a^{2}} + a y} \sqrt{a^2+1} f_{m}(iy) \cdot \vec{n}(f_{m}(iy)) \\
& = \mu e^{-\frac{a\theta_{m}}{1+a^{2}} + a y} \sqrt{a^2+1} \,\mathrm{Re}\!\left(e^{-\frac{a\theta_{m}}{1+a^{2}}+ay} \frac{1-ia}{\sqrt{a^2+1}}\right) \\
& = \mu e^{2a(y-\frac{\theta_{m}}{1+a^{2}})}, \quad y\in\R.
\end{aligned}
\end{equation}
It is not difficult to check that the analogous calculations provide
\begin{align}\label{eq-bound-22}
\tilde{w}_{1}^{R}\left(-\frac{a\theta_{m}}{1+a^{2}} + i y\right) = \mu e^{2a(y-\frac{\theta_{m}}{1+a^{2}})}, \ \ y\in\R.
\end{align}
To derive the condition for the imaginary part $\tilde w_{2}$ we use \eqref{vdruga} to obtain 
\begin{align*}
\tilde{w}_{2}^{L}\left(-\frac{a\theta_{m}}{1+a^{2}} + i y\right) & = e^{-\frac{a\theta_{m}}{1+a^{2}} + a y} \sqrt{a^2+1}  w^{L}(f_{m}(iy)) \cdot \vec{\tau}(f_{m}(iy))
\end{align*}
and
\begin{align*}
\tilde{w}_{2}^{R}\left(-\frac{a\theta_{m}}{1+a^{2}} + i y\right) & = e^{-\frac{a\theta_{m}}{1+a^{2}} + a y} \sqrt{a^2+1}  w^{R}(f_{m}(iy)) \cdot \vec{\tau}(f_{m}(iy)) 
\end{align*}
for $y\in\R$. Hence, by the condition \eqref{skoki-tan-m} and the equation \eqref{cond-gamma-m}, we have 
\begin{equation}
\begin{aligned}\label{eq-bound-2bb}
&\tilde{w}_{2}^{R}\left(-\frac{a\theta_{m}}{1+a^{2}} + i y\right) - \tilde{w}_{2}^{L}\left(-\frac{a\theta_{m}}{1+a^{2}} + i y\right) \\
&\qquad = e^{-\frac{a\theta_{m}}{1+a^{2}} + a y} \sqrt{a^2+1}  (w^{R}(f_{m}(iy)) - w^{L}(f_{m}(iy))) \cdot \vec{\tau}(f_{m}(iy)) \\
&\qquad = e^{-\frac{a\theta_{m}}{1+a^{2}} + a y} \sqrt{a^2+1}\gamma_{m}(f_{m}(iy)) \\
&\qquad = e^{-\frac{a\theta_{m}}{1+a^{2}} + a y} \sqrt{a^2+1} \frac{2ag_{m}}{\sqrt{1+a^{2}}} e^{a(y - \frac{\theta_{m}}{1+a^{2}})} \\
&\qquad = 2ag_{m} e^{2a(y - \frac{\theta_{m}}{1+a^{2}})}, \quad y\in\R.
\end{aligned}
\end{equation}
From \eqref{eq-bound-11}, \eqref{eq-bound-22} and \eqref{eq-bound-2bb}, it follows that the transformed velocity field $\tilde w$ is an antiholomorphic function defined on the set $\SS_{F}$ and satisfies on the lines $\ell_{m}$ the following jump conditions 
\begin{align}\label{eq-bound}
& \tilde{w}_{1}^{R}(z) = \tilde{w}_{1}^{L}(z) = \mu e^{2a(y-\frac{\theta_{m}}{1+a^{2}})}, \\ \label{jump}
& \tilde{w}_{2}^{R}(z) - \tilde{w}_{2}^{L}(z) = 2ag_{m} e^{2a(y - \frac{\theta_{m}}{1+a^{2}})}, 
\end{align}
for $z=x+iy\in\ell_{m}$ and $1\le m \le M-1$. Using the same calculations as in Section \ref{sec-frame}, based on the jump relations \eqref{skoki-nor-m}, \eqref{skoki-tan-m}, and the formula \eqref{cond-gamma-m}, we conclude that the transformed velocity field $\tilde w$ satisfies the conditions 
\begin{align}\label{eq-36dd}
\tilde{w}_{1}(z) = \mu e^{2ay}, \ \ z\in\ell_{0}, \qquad 
\tilde{w}_{1}(z) = \mu e^{2a(y - \frac{2\pi}{1+a^2})}, \ \ z\in\ell_{M}
\end{align}
and
\begin{align}\label{eq-39dd}
\tilde{w}_{2}\left(-\frac{2a\pi}{1+a^2} + i\left(y + \frac{2\pi}{1+a^2}\right)\right) - \tilde{w}_{2}(iy) = 2ag_{0} e^{2ay}, \quad y\in i\R.
\end{align}
Moreover, analogous transformation of the decay \eqref{decay-m} leads to the limit 
\begin{align}\label{v-decay-2}
\tilde{w}(x+iy)e^{-ay} \to 0, \quad y\to-\infty,
\end{align}
which is uniform for $x\in\SS_{F}\cap \R$.
\begin{prop}
Let $\tilde{w}^j = \tilde{w}^{j}_{1}+i\tilde{w}^{j}_{2}$, for $j = 1, 2$, be two antiholomorphic velocities in the set $\SS_{F}$ satisfying the jump conditions \eqref{eq-bound}, \eqref{jump} on the lines $\ell_{m}$ for $1\le m \le M-1$, as well as the boundary conditions \eqref{eq-36dd}, \eqref{eq-39dd}. Furthermore, assume that for each $j=1,2$ we have the limit
\begin{equation*}
\tilde{w}^j(x+iy)\rightarrow 0\quad \text{as} \ \ y\to -\infty,
\end{equation*}
which is uniform for $x\in\SS_{F}\cap \R$. Then $\tilde{w}^1 = \tilde{w}^2$ on $\SS_{F}$.
\end{prop}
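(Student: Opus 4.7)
The plan is to reduce the proposition to Corollary~\ref{cor-eq-zero} by considering the difference $\tilde w := \tilde w^1 - \tilde w^2$ and showing that, although it is defined only on $\SS_F$ a priori, it extends antiholomorphically to the entire strip $\SS$. First I would observe that both $\tilde w^1$ and $\tilde w^2$ satisfy the same jump relations \eqref{eq-bound} and \eqref{jump} across every internal line $\ell_m$, $1 \le m \le M-1$. Subtracting, the difference has no jump at all:
\begin{equation*}
\tilde w_1^R(z) - \tilde w_1^L(z) = 0 \and \tilde w_2^R(z) - \tilde w_2^L(z) = 0, \quad z\in \ell_m.
\end{equation*}
Hence $\tilde w$ is continuous across each $\ell_m$. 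Applying a standard Morera-type argument (any triangle crossing $\ell_m$ splits into two pieces on either side, each producing a vanishing contour integral by Cauchy's theorem, and continuity across $\ell_m$ lets the straddling edge contributions cancel), the antiholomorphic function $\tilde w$ extends antiholomorphically across $\ell_m$. Iterating across $\ell_1,\ldots,\ell_{M-1}$ yields an antiholomorphic extension of $\tilde w$ to the full strip $\SS$.

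Next I would translate the data at the outer boundaries into the hypotheses of Corollary~\ref{cor-eq-zero}. Subtracting the two instances of \eqref{eq-36dd} gives $\tilde w_1(iy) = 0$ and $\tilde w_1(-2a\pi/(1+a^2) + iy) = 0$ for $y\in\R$, while subtracting \eqref{eq-39dd} yields
\begin{equation*}
\tilde w_2\!\left(-\frac{2a\pi}{1+a^2} + i\!\left(y + \frac{2\pi}{1+a^2}\right)\right) = \tilde w_2(iy), \quad y\in\R.
\end{equation*}
Finally, the decay \eqref{v-decay-2} for each $\tilde w^j$ forces $\tilde w^j(x+iy)\to 0$ as $y\to-\infty$ (since $e^{-ay}\to+\infty$ means $\tilde w^j(x+iy) = o(e^{ay}) \to 0$), and so the difference inherits the same pointwise decay required by the corollary.

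With all three assumptions verified for $\tilde w$ on $\SS$, Corollary~\ref{cor-eq-zero} applies and yields $\tilde w\equiv 0$, i.e.\ $\tilde w^1 = \tilde w^2$ on $\SS_F$. The only genuinely nontrivial step is the antiholomorphic extension across the internal lines $\ell_m$, which is what bridges the multi-spiral setting of Section~\ref{wiele_spiral} to the single-strip statement of Corollary~\ref{cor-eq-zero}. I do not expect further obstacles: the jump conditions are stated pointwise, so continuity of both components of $\tilde w$ across each $\ell_m$ is immediate, and the removal of the $\ell_m$ as singular set is a textbook application of Morera's theorem.
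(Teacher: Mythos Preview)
Your proof is correct and follows essentially the same route as the paper: form the difference, use the cancellation of the jump conditions to extend it (anti)holomorphically across the internal lines $\ell_m$ to all of $\SS$, and then invoke Corollary~\ref{cor-eq-zero} with the subtracted boundary data and decay. The only point the paper makes slightly more explicit is that the uniform decay on $\SS_F\cap\R$ combined with the continuous extension across each $\ell_m$ yields the required pointwise decay also at the points $x=-a\theta_m/(1+a^2)$, which lie in the open interval where Corollary~\ref{cor-eq-zero} needs it.
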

\begin{proof}
Let us define the holomorphic function on the set $\SS_{F}$ as follows
\begin{equation*}
h(z) := (\tilde w^1(z) - \tilde w^2(z))^{*}.
\end{equation*}
By the conditions \eqref{eq-bound} and \eqref{jump}, for any $1\le m \le M-1$, we have
\begin{align*}
h^{R}(z) - h^{L}(z) = ((\tilde w^1)^{R}(z) - (\tilde w^1)^{L}(z))^{*} - ((\tilde w^2)^{R}(z) - (\tilde w^2)^{L}(z))^{*} = 0
\end{align*}
for $z\in\ell_{m}$. Hence $h$ extends to a holomorphic function on the whole strip $\SS$ and satisfies the following limit
\begin{align*}
h(x+iy)\to 0, \quad y\to-\infty,
\end{align*}
for $-2\pi a/(1+a^{2}) <x <0$ such that $x\neq -a\theta_{m}/(1+a^{2})$ for $1\le m \le M-1$. Moreover, since the limit \eqref{v-decay-2} is uniform, it follows that
\begin{align*}
h^{R}\left(-\frac{a\theta_{m}}{1+a^{2}}+yi\right) = h^{L}\left(-\frac{a\theta_{m}}{1+a^{2}}+yi\right) \to 0, \quad y\to-\infty.
\end{align*}
Observe that, by the conditions \eqref{eq-bound}, the function $h = h_{1} + ih_{2}$ satisfies
\begin{align*}
h_{1}(iy) = h_{1}\left(-\frac{2a\pi}{1+a^2} + iy\right) = 0, \quad y\in\R,
\end{align*}
whereas the jump condition \eqref{jump} gives
\begin{align*}
h_{2}\left(-\frac{2a\pi}{1+a^2} + iy\right) - h_{2}(iy) = 0, \quad y\in\R.
\end{align*}
Applying Corollary \ref{cor-eq-zero} to the function $h^{*}$ gives $h^{*}(z) = 0$ for $z\in\SS$, and consequently $\tilde w^1 = \tilde w^2$ on the set $\SS_{F}$. Thus the proof is completed. 
\end{proof}
The considerations in Section \ref{wiele_spiral} are summarized in the following theorem.
\begin{theorem}\label{th-spiral-unique}
Let $w$ be a two-dimensional weakly divergence-free velocity field satisfying the velocity matching condition \eqref{vel_match-m} and the decay condition \eqref{decay-m}. Assume that 
\begin{equation}\label{curl-form}
\curl w = \sum_{k=0}^{M-1}\gamma_{k}\, \delta_{\Sigma_{k}}
\end{equation}
in the sense of distributions, where $\delta_{\Sigma_{k}}$ denotes the Dirac measure supported on the spiral $\Sigma_{k}$. Then the velocity field $w$ is uniquely determined. 
\end{theorem}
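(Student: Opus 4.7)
The plan is to reduce Theorem \ref{th-spiral-unique} to the preceding Proposition by transferring the entire setup from the spiral reference frame to the strip via the conformal mapping $f$ of Lemma \ref{mapping}. I would start with two candidate fields $w^1, w^2$ satisfying the hypotheses. Since each $w^j$ is weakly divergence-free with distributional curl of the form \eqref{curl-form}, on the complement $\RR_F$ it is smooth, divergence-free and curl-free, so $(w^j)^{*}$ is holomorphic there. Defining $\tilde{w}^j$ as in Section \ref{sec-frame}, either directly through $(\tilde w^j)^{*}(z) = (w^j)^{*}(f(z))\, f'(z)$ or via the potential relation $\tilde{\Phi}_j = \Phi_j \circ f$ on each simply connected component of $\SS_F$, produces antiholomorphic functions on $\SS_F = \SS \setminus (\ell_1 \cup \cdots \cup \ell_{M-1})$.

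Next I would check that each $\tilde{w}^j$ satisfies every hypothesis of the preceding Proposition. This is precisely the line-by-line translation already carried out in Section \ref{wiele_spiral}. The velocity matching condition \eqref{vel_match-m}, applied spiral by spiral, yields the jump condition \eqref{eq-bound} on every internal line $\ell_m$ and the boundary conditions \eqref{eq-36dd} on $\ell_0$ and $\ell_M$; the tangential jump condition \eqref{skoki-tan-m}, which is built into \eqref{curl-form} together with weak divergence-freeness, converts into \eqref{jump} and \eqref{eq-39dd}; and finally the decay \eqref{decay-m} combined with \eqref{lim-f1} and the pushforward formulas \eqref{vpierwsza}, \eqref{vdruga} forces the uniform decay \eqref{v-decay-2}. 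These translations rely only on the explicit formulas \eqref{vec-tan-norm} for the unit tangent and normal vectors together with the identity \eqref{eq-f-spiral}, so no further analytic input is required at this step.

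With both transformed fields meeting the hypotheses, the preceding Proposition forces $\tilde{w}^1 = \tilde{w}^2$ on $\SS_F$; pulling back through the bijection $f \colon \SS_F \to \RR_F$ then yields $w^1 = w^2$ on $\RR_F$, hence almost everywhere on $\R^2$, which is all the theorem asks. The main analytic difficulty, namely uniqueness on $\SS_F$ in the presence of partial outer boundary data together with internal jump discontinuities, has already been absorbed into the preceding Proposition: the key observation there is that the difference $\tilde{w}^1 - \tilde{w}^2$ has matching jumps across each $\ell_m$ and therefore extends holomorphically across the internal lines to all of $\SS$, after which Corollary \ref{cor-eq-zero} applies and forces the difference to vanish. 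The hard analytic work is thus entirely upstream, and the only labor remaining at this stage is the careful $\theta_m$-dependent bookkeeping of the shifted boundary values, which the preparatory computations in Section \ref{wiele_spiral} have already rendered essentially mechanical.
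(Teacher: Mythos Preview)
Your proposal is correct and follows essentially the same approach as the paper: the theorem is stated as a summary of the preceding development in Section \ref{wiele_spiral}, and your outline---transfer to the strip via $f$, verify the translated boundary/jump/decay conditions \eqref{eq-bound}--\eqref{v-decay-2}, invoke the preceding Proposition, and pull back---matches that development exactly. The paper gives no separate proof block for this theorem, treating it as an immediate consequence of the Proposition together with the translation computations already carried out.
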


\section{Boundary problem for a family of logarithmic spirals}
In this section, we assume that $w$ is the velocity field satisfying, on each spiral $\Sigma_{m}$ and the continuity of normal components \eqref{skoki-nor-m}, the tangential jump conditions \eqref{skoki-tan-m}. We assume also that the decay condition \eqref{v-decay-2} holds and the spirals parameters are constrained by the following system of discrete equations (see \cite[eq. (1.18)]{CKO})
\begin{equation}\label{eq-disc2}
\frac{1}{\sinh (\pi A) }\sum_{k=0}^{M-1} \mathcal{A}_{mk}g_{k} = -(a^2+1-2\mu +2a\mu i )/2a^2
\end{equation}
for $0 \le m \le M-1$, where $A = -2ai/(a+i)$ and 
\begin{equation}\label{def_of_Amk} 
\mathcal{A}_{mk}:= e^{A(\theta_{k}-\theta_{m})}
\left\{\begin{aligned}
& e^{-\pi A}, &&  k>m ,\\
& \cosh(\pi A), && k=m , \\
& e^{\pi A},  && k<m
\end{aligned}\right.
\end{equation}
for $0 \le k,m\le M-1$. From \cite[Theorem 1.3]{CKO} it follows that the system formed by the real parts of the equations \eqref{def_of_Amk} is equivalent to the velocity matching condition \eqref{vel_match-m}. Hence, by results of Section \ref{wiele_spiral}, the transformed velocity field $\tilde w$ given by \eqref{transform} satisfies on the strip $\SS_{F}$ the boundary conditions \eqref{eq-bound}\,--\,\eqref{eq-39dd} and exhibits the decay property \eqref{v-decay-2}. We look for the formula for the velocity $\tilde w = \tilde{w}_{1} + i\tilde{w}_{2} $ postulating that the first component has the form  
\begin{align}\label{w1}
\tilde w_{1}(z) := \sum_{l=0}^{M-1} e^{r_{l}(z)}(A_{1,l}\sin\varphi_{l}(z) + A_{2,l}\cos\varphi_{l}(z)),
\end{align}
where the amplitude exponent and the phase function are given by
\begin{gather*}
r_{l}(z) := 2ay + \left(\theta_{l} + 2\pi\left(\mathbf{1}_{\SS_{<l}}(z) - \mathbf{1}_{\SS_{<0}}(z)\right)\right)\mathrm{Re}\,A, \\
\varphi_{l}(z) := 2ax- \left(\theta_{l} + 2\pi\left(\mathbf{1}_{\SS_{<l}}(z) - \mathbf{1}_{\SS_{<0}}(z)\right)\right)\mathrm{Im}\,A
\end{gather*}
with the set $\SS_{<l}$ defined as
\begin{align*}
\SS_{<l} := \left\{z\in\SS_{F} \ | \ -\frac{2\pi a}{1+a^2} < \mathrm{Re}\,z < -\frac{a\theta_{l}}{1+a^{2}}\right\}, \quad 0\le l \le M-1.
\end{align*} 
Since the function $\tilde w^{*}$ is holomorphic on the strip, from the Cauchy-Riemann equations we obtain
\begin{equation*}
\begin{aligned}
(\tilde{w}_{2})_y(z) = -(\tilde{w}_{1})_x(z) = 2a\sum_{l=0}^{M-1} e^{r_{l}(z)}(A_{2,l}\sin\varphi_{l}(z) - A_{1,l}\cos\varphi_{l}(z)),
\end{aligned}
\end{equation*}
which together with the decay \eqref{v-decay-2} gives
\begin{equation}\label{w2}
\tilde{w}_{2}(z) = \sum_{l=0}^{M-1} e^{r_{l}(z)}(A_{2,l}\sin\varphi_{l}(z) - A_{1,l}\cos\varphi_{l}(z)).
\end{equation}
\subsection{Derivation of boundary conditions.} We now proceed to express the boundary conditions \eqref{eq-bound}–\eqref{eq-39dd} for the function $\tilde w$, given by \eqref{w1} and \eqref{w2}, in terms of the parameters \eqref{log_spirals_parameters} that define the family of logarithmic spirals. To this end, we observe that, for any $0\le l,m \le M-1$, we have
\begin{equation}
\begin{aligned}\label{eq-b-l}
r_{l}^{L}\left(-\frac{a\theta_{m}}{1+a^{2}}+iy\right) & = 2ay + (\theta_{l} - 2\pi\mathbf{1}_{m<l})\mathrm{Re}\,A, \\[2pt] 
\varphi_{l}^{L}\left(-\frac{a\theta_{m}}{1+a^{2}}+iy\right) & = (2\pi\mathbf{1}_{m<l} + \theta_{m} - \theta_{l})\mathrm{Im}\,A,
\end{aligned}
\end{equation}
and, for any $0\le l\le M-1$ and $1\le m \le M$, the following relations hold
\begin{equation}
\begin{aligned}\label{eq-b-r}
r_{l}^{R}\left(-\frac{a\theta_{m}}{1+a^{2}}+iy\right) & = 2ay + (\theta_{l} - 2\pi\mathbf{1}_{m\le l})\mathrm{Re}\,A, \\[2pt]
\varphi_{l}^{R}\left(-\frac{a\theta_{m}}{1+a^{2}}+iy\right) & = (2\pi\mathbf{1}_{m\le l} + \theta_{m} - \theta_{l})\mathrm{Im}\,A.
\end{aligned}
\end{equation}
Then, for $z\in\ell_{0}$, the following equality holds
\begin{equation}
\begin{aligned}\label{b-1}
\tilde w_{1}(z) & = A_{2,0}e^{2ay} + \sum_{l=1}^{M-1} A_{1,l}\sin\left((2\pi-\theta_{l})\,\mathrm{Im}\,A\right)e^{2ay}e^{(\theta_{l}-2\pi)\mathrm{Re}\,A} \\
& \quad + \sum_{l=1}^{M-1} A_{2,l}\cos\left((2\pi-\theta_{l})\,\mathrm{Im}\,A\right)e^{2ay}e^{(\theta_{l}-2\pi)\mathrm{Re}\,A},
\end{aligned}
\end{equation}
whereas, for $z\in\ell_{M}$, we have
\begin{equation}\label{b-2}
\begin{aligned}
\tilde w_{1}(z) & = \sum_{l=0}^{M-1}A_{1,l}\sin\left((2\pi-\theta_{l})\mathrm{Im}\,A\right)e^{2ay}e^{\theta_{l}\mathrm{Re}\,A} \\
&\quad + \sum_{l=0}^{M-1}A_{2,l}\cos\left((2\pi-\theta_{l})\mathrm{Im}\,A\right)e^{2ay}e^{\theta_{l}\mathrm{Re}\,A}.
\end{aligned}
\end{equation}
From \eqref{b-1} and \eqref{b-2} it follows that the condition \eqref{eq-36dd} can be written as
$$\begin{aligned}
\mu & = A_{2,0} + \sum_{l=1}^{M-1}A_{1,l}\sin\left((2\pi - \theta_{l})\mathrm{Im}\,A\right)e^{(\theta_{l}-2\pi)\mathrm{Re}\,A} \\
& \quad + \sum_{l=1}^{M-1}A_{2,l}\cos\left((2\pi - \theta_{l})\mathrm{Im}\,A\right)e^{(\theta_{l}-2\pi)\mathrm{Re}\,A},
\end{aligned}\leqno{(B_{1})}$$
while the second equation of condition \eqref{eq-36dd} takes the form
$$\begin{aligned}
\mu e^{2\pi \mathrm{Re}\,A} & = \sum_{l=0}^{M-1}A_{1,l}\sin((2\pi - \theta_{l})\mathrm{Im}\,A)e^{\theta_{l}\mathrm{Re}\,A} \\
& \quad + \sum_{l=0}^{M-1}A_{2,l}\cos((2\pi - \theta_{l})\mathrm{Im}\,A)e^{\theta_{l}\mathrm{Re}\,A}.
\end{aligned}\leqno{(B_{2})}$$
Observe that given $1\le m \le M-1$ and $z\in\ell_{m}$, we have 
\begin{equation}\label{form-w-1}
\hspace{-5pt}\begin{aligned}
\tilde w_{1}^{L}(z) & = \sum_{l=0}^{M-1}A_{1,l}\sin\left((2\pi\mathbf{1}_{m<l} + \theta_{m} - \theta_{l})\mathrm{Im}\,A \right)\!e^{2ay + (\theta_{l} - 2\pi\mathbf{1}_{m<l})\mathrm{Re}\,A} \\
& \ \ + \sum_{l=0}^{M-1}A_{2,l}\cos\left((2\pi\mathbf{1}_{m<l} + \theta_{m} - \theta_{l})\mathrm{Im}\,A\right)\!e^{2ay + (\theta_{l} - 2\pi\mathbf{1}_{m<l})\mathrm{Re}\,A}
\end{aligned}
\end{equation}
and 
\begin{equation}\label{form-w-2}
\hspace{-9pt}\begin{aligned}
\tilde w_{1}^{R}(z) & =\sum_{l=0}^{M-1}A_{1,l}\sin\left((2\pi\mathbf{1}_{m\le l} + \theta_{m} - \theta_{l})\mathrm{Im}\,A\right)\!e^{2ay + (\theta_{l} - 2\pi\mathbf{1}_{m\le l})\mathrm{Re}\,A} \\
& \ \ + \sum_{l=0}^{M-1}A_{2,l}\cos\left((2\pi\mathbf{1}_{m\le l} + \theta_{m} - \theta_{l})\mathrm{Im}\,A\right)\!e^{2ay + (\theta_{l} - 2\pi\mathbf{1}_{m\le l})\mathrm{Re}\,A}.
\end{aligned}
\end{equation}
By subtracting the equations \eqref{form-w-1} and \eqref{form-w-2}, we see that the first equality of the condition \eqref{eq-bound} is equivalent to 
$$A_{2,m} = \left(A_{1,m}\sin\left(2\pi\,\mathrm{Im}\,A\right) + A_{2,m}\cos\left(2\pi\,\mathrm{Im}\,A\right)\right)e^{-2\pi\mathrm{Re}\,A} \leqno{(B_{3})} $$
for $1\le m \le M-1$. Moreover, by the formula \eqref{form-w-1}, we see that the second equality of \eqref{eq-bound} can be written as
$$\begin{aligned}
\mu e^{\theta_{m}\mathrm{Re}\,A} & = \sum_{l=0}^{M-1}A_{1,l}\sin((2\pi\mathbf{1}_{m < l}+\theta_{m}-\theta_{l})\mathrm{Im}\,A)e^{(\theta_{l} - 2\pi\mathbf{1}_{m<l})\mathrm{Re}\,A} \\
& \ \ + \sum_{l=0}^{M-1}A_{2,l}\cos((2\pi\mathbf{1}_{m<l} + \theta_{m} - \theta_{l})\mathrm{Im}\,A)e^{(\theta_{l} - 2\pi\mathbf{1}_{m<l})\mathrm{Re}\,A}
\end{aligned}\leqno{(B_{4})}$$
for $1\le m\le M-1$. We now apply the boundary conditions to the function $\tilde w_{2}$. Note that, according to \eqref{w2} and \eqref{eq-b-l}, for $z\in\ell_{0}$, the following holds
\begin{align*}
\tilde{w}_{2}(z) & = -A_{1,0}e^{2ay} + \sum_{l=1}^{M-1} A_{2,l}\sin\left((2\pi-\theta_{l})\,\mathrm{Im}\,A\right)e^{2ay}e^{(\theta_{l}-2\pi)\mathrm{Re}\,A} \\
& \quad - \sum_{l=1}^{M-1} A_{1,l}\cos\left((2\pi-\theta_{l})\,\mathrm{Im}\,A\right)e^{2ay}e^{(\theta_{l}-2\pi)\mathrm{Re}\,A},
\end{align*}
while, by \eqref{w2} and \eqref{eq-b-r}, for $z\in\ell_{M}$, the following holds
\begin{equation*}
\begin{aligned}
\tilde w_{2}(z) & = \sum_{l=0}^{M-1}(A_{2,l}\sin((2\pi-\theta_{l})\mathrm{Im}\,A)-A_{1,l}\cos((2\pi-\theta_{l})\mathrm{Im}\,A))e^{2ay}e^{\theta_{l}\mathrm{Re}\,A}.
\end{aligned}
\end{equation*}
In particular, the condition \eqref{eq-39dd} can be expressed as
\begin{equation*}
\begin{aligned}
2ag_{0} e^{2ay} & = \tilde{w}_{2}\left(-\frac{2a\pi}{1+a^2} + i\left(y + \frac{2\pi}{1+a^2}\right)\right) - \tilde{w}_{2}(iy) \\
& = \sum_{l=0}^{M-1}A_{2,l}\sin\left((2\pi-\theta_{l})\mathrm{Im}\,A\right)e^{2ay}e^{(\theta_{l}-2\pi)\mathrm{Re}\,A} \\
&\quad - \sum_{l=0}^{M-1}A_{1,l}\cos\left((2\pi-\theta_{l})\mathrm{Im}\,A\right)e^{2ay}e^{(\theta_{l}-2\pi)\mathrm{Re}\,A} \\
& \quad +A_{1,0}e^{2ay} - \sum_{l=1}^{M-1} A_{2,l}\sin\left((2\pi-\theta_{l})\,\mathrm{Im}\,A\right)e^{2ay}e^{(\theta_{l}-2\pi)\mathrm{Re}\,A} \\
& \quad + \sum_{l=1}^{M-1} A_{1,l}\cos\left((2\pi-\theta_{l})\,\mathrm{Im}\,A\right)e^{2ay}e^{(\theta_{l}-2\pi)\mathrm{Re}\,A},
\end{aligned}
\end{equation*}
which after cancellation of the common terms, leads to 
$$2ag_{0}= A_{2,0}\sin\left(2\pi\mathrm{Im}\,A\right)e^{-2\pi\mathrm{Re}\,A} + A_{1,0}\left(1-\cos\left(2\pi\mathrm{Im}\,A\right)e^{-2\pi\mathrm{Re}\,A}\right). \leqno{(B_{5})}$$
Furthermore, for any $z\in\ell_{m}$, where $1\le m \le M-1$, we have 
\begin{align*}
\tilde w_{2}^{L}(z) & = \sum_{l=0}^{M-1}A_{2,l}\sin\left((2\pi\mathbf{1}_{m<l} + \theta_{m} - \theta_{l})\,\mathrm{Im}\,A \right)e^{2ay + (\theta_{l} - 2\pi\mathbf{1}_{m<l})\mathrm{Re}\,A} \\
& \ \ - \sum_{l=0}^{M-1}A_{1,l}\cos\left((2\pi\mathbf{1}_{m<l} + \theta_{m} - \theta_{l})\,\mathrm{Im}\,A\right)e^{2ay + (\theta_{l} - 2\pi\mathbf{1}_{m<l})\mathrm{Re}\,A}
\end{align*}
and 
\begin{align*}
\tilde w_{2}^{R}(z) & = \sum_{l=0}^{M-1}A_{2,l}\sin\left((2\pi\mathbf{1}_{m\le l} + \theta_{m} - \theta_{l})\,\mathrm{Im}\,A\right)e^{2ay + (\theta_{l} - 2\pi\mathbf{1}_{m\le l})\mathrm{Re}\,A} \\
& \ \ - \sum_{l=0}^{M-1}A_{1,l}\cos\left((2\pi\mathbf{1}_{m\le l} + \theta_{m} - \theta_{l})\,\mathrm{Im}\,A\right)e^{2ay + (\theta_{l} - 2\pi\mathbf{1}_{m\le l})\mathrm{Re}\,A}.
\end{align*}
This implies that the condition \eqref{jump}, takes the form 
$$2ag_{m} = A_{2,m}\sin(2\pi\mathrm{Im}\,A)e^{-2\pi\mathrm{Re}\,A} + A_{1,m}\left(1 - \cos(2\pi\mathrm{Im}\,A)e^{-2\pi\mathrm{Re}\,A}\right)\leqno{(B_{6})}$$ 
for $1\le m \le M-1$.
\subsection{Derivation of the coefficients $A_{m}$.} \label{sec-am} In what follows, we use the boundary conditions $(B_{1})$\,--\,$(B_{6})$ to determine the coefficients $A_{m}$. For this purpose, we apply the condition $(B_{1})$ to the equality $(B_{2})$, in order to obtain
\begin{equation}\label{eq-z-zero}
A_{2,0} = (A_{1,0}\sin\left(2\pi\mathrm{Im}\,A\right) + A_{2,0}\cos\left(2\pi\mathrm{Im}\,A\right))e^{-2\pi\mathrm{Re}\,A}.
\end{equation}
Adding the equation $(B_{3})$ (respectively, \eqref{eq-z-zero}) multiplied by the imaginary unit to $(B_{6})$ (respectively, $(B_{5})$) yields 
\begin{align*}
2ag_{m} & = A_{1,m}-iA_{2,m} + (A_{2,m}+iA_{1,m})\sin\left(2\pi\mathrm{Im}\,A\right)e^{-2\pi\mathrm{Re}\,A} \\
& \quad - (A_{1,m}-iA_{2,m})\cos(2\pi\mathrm{Im}\,A)e^{-2\pi\mathrm{Re}\,A} \\
& = (A_{1,m}-iA_{2,m})\left(1 - \left(\cos(2\pi\mathrm{Im}\,A) - i \sin\left(2\pi\mathrm{Im}\,A\right)\right)e^{-2\pi\mathrm{Re}\,A} \right) \\
& = (A_{1,m}-iA_{2,m})\left(1 - e^{-2\pi A}\right),
\end{align*}
which implies that 
\begin{align}\label{form-a}
-A_{1,m} + i A_{2,m} = \frac{2a g_{m} e^{2\pi A}}{1-e^{2\pi A}}, \quad 0\le m \le M-1.
\end{align}
In particular, we have
\begin{align}\label{from-a1}
A_{1,m} = \frac{2ag_{m}(e^{2\pi\mathrm{Re}\,A} - \cos(2\pi\mathrm{Im}\,A))}{e^{2\pi\mathrm{Re}\,A} + e^{-2\pi\mathrm{Re}\,A} - 2\cos(2\pi\mathrm{Im}\,A)}
\end{align}
and 
\begin{align}\label{from-a2}
A_{2,m} = \frac{2ag_{m}\sin(2\pi\mathrm{Im}\,A)}{e^{2\pi\mathrm{Re}\,A} + e^{-2\pi\mathrm{Re}\,A} - 2\cos(2\pi\mathrm{Im}\,A)}.
\end{align}
\subsection{Formula for the velocity field.} We proceed to the main result of this section, stated in the following proposition.
\begin{prop}\label{prop-form}
Let $w$ be a two-dimensional weakly divergence-free velocity field satisfying the conditions \eqref{decay-m} and \eqref{curl-form}. If the discrete system \eqref{eq-disc2} is satisfied, then the corresponding velocity field $\tilde{w}=\tilde{w}_1+i\tilde{w}_2$ in the strip $\SS_{F}$ is uniquely determined by the component functions \eqref{w1}, \eqref{w2} with the corresponding coefficients given by \eqref{from-a1}, \eqref{from-a2}.
\end{prop}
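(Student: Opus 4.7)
The plan is to rely on the uniqueness established in Section~\ref{wiele_spiral} and reduce the statement to verifying that the explicit ansatz \eqref{w1}, \eqref{w2} with the coefficients \eqref{from-a1}, \eqref{from-a2} realizes all the boundary conditions \eqref{eq-bound}--\eqref{eq-39dd} together with the decay \eqref{v-decay-2}. The ansatz is antiholomorphic on each connected component of $\SS_F$ because $\tilde w_2$ is built from $\tilde w_1$ through the Cauchy--Riemann equation with no $e^{-2ay}$ integration constant, so \eqref{v-decay-2} is automatic.

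From the derivation in Section~\ref{sec-am}, the coefficients \eqref{from-a1}, \eqref{from-a2} are precisely those forced by the subsystem consisting of $(B_3)$, $(B_5)$, $(B_6)$ together with \eqref{eq-z-zero} (which is exactly $(B_2)-(B_1)$). Hence this subsystem is satisfied by construction, and the task reduces to verifying the remaining conditions: $(B_1)$, equivalently $(B_2)$ via \eqref{eq-z-zero}, and $(B_4)$ for each $1\le m\le M-1$. These are precisely the conditions that involve $\mu$ and couple different spirals through the phase shifts $\theta_l$.

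The key step will be to complexify each trigonometric sum appearing in $(B_4)$ via
\[
A_{1,l}\sin\alpha+A_{2,l}\cos\alpha=\mathrm{Im}\bigl((-A_{1,l}+iA_{2,l})\,e^{-i\alpha}\bigr),
\]
then substitute the identity $-A_{1,l}+iA_{2,l}=\dfrac{2ag_l e^{2\pi A}}{1-e^{2\pi A}}$ from \eqref{form-a} and use the factorization $1-e^{2\pi A}=-2e^{\pi A}\sinh(\pi A)$. Since $\tilde w_1^L=\tilde w_1^R$ on each $\ell_m$ by $(B_3)$, the condition $(B_4)$ can be written in two equivalent forms, and averaging them symmetrizes the diagonal contribution $l=m$ into $\cosh(\pi A)$, while the off-diagonal contributions acquire the factors $e^{-\pi A}$ for $l>m$ and $e^{\pi A}$ for $l<m$. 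The resulting identity is exactly the real part of the assumed system \eqref{eq-disc2}, with the matrix $\mathcal{A}_{mk}$ from \eqref{def_of_Amk} emerging naturally; the case $m=0$ reduces $(B_1)$ to the $m=0$ row of \eqref{eq-disc2}, and by \cite[Theorem~1.3]{CKO} the real part of \eqref{eq-disc2} coincides with the velocity matching condition \eqref{vel_match-m}.

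The main obstacle will be the careful bookkeeping of the indicator functions $\mathbf{1}_{m<l}$ and $\mathbf{1}_{m\le l}$ that appear in $r_l^{L/R}$ and $\varphi_l^{L/R}$, and the verification that, after averaging the two equivalent forms of $(B_4)$, the exponential factors regroup exactly into the three regimes that define $\mathcal{A}_{mk}$. Once this reorganization is performed, the reduction of $(B_1)$ and $(B_4)$ to the real part of \eqref{eq-disc2} is transparent, and the proof is complete.
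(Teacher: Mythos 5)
Your overall strategy coincides with the paper's: uniqueness is imported from Theorem \ref{th-spiral-unique}, the coefficients \eqref{from-a1}, \eqref{from-a2} are recognized as forced by $(B_{3})$, $(B_{5})$, $(B_{6})$ together with \eqref{eq-z-zero}, and the proof reduces to verifying $(B_{1})$, $(B_{2})$ and $(B_{4})$. Your complexification $A_{1,l}\sin\alpha+A_{2,l}\cos\alpha=\mathrm{Im}\bigl((-A_{1,l}+iA_{2,l})e^{-i\alpha}\bigr)$ combined with \eqref{form-a} and $1-e^{2\pi A}=-2e^{\pi A}\sinh(\pi A)$ is a cleaner packaging of the paper's term-by-term trigonometric computation, and your averaging of the left and right forms of $(B_{4})$ is a legitimate alternative to the paper's direct use of the left-hand limits (no averaging is actually needed: the diagonal term $e^{\pi A}/\sinh(\pi A)=\coth(\pi A)+1$ already yields $\mathrm{Im}\,\mathcal{B}_{mm}$ because $g_{m}$ is real).

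There is, however, one concrete error: $(B_{1})$ and $(B_{4})$ reduce to the \emph{imaginary} part of \eqref{eq-disc2}, not the real part. Carrying out your own substitution for the $m=0$ row gives $\mu=-a\,\mathrm{Im}\bigl(\sinh^{-1}(\pi A)\sum_{k}\mathcal{A}_{0k}g_{k}\bigr)$, i.e. exactly the identity $\sum_{k}\mathrm{Im}(\mathcal{B}_{0k})g_{k}=-\mu/a$ coming from the term $-2a\mu i/2a^{2}$ on the right-hand side of \eqref{eq-disc2}; the real part has right-hand side $-(a^{2}+1-2\mu)/2a^{2}$ and cannot produce the required value $\mu$. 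This matters because you lean on the assertion that the real part is equivalent to velocity matching in order to conclude that $(B_{1})$ and $(B_{4})$ hold; that equivalence (from \cite[Theorem 1.3]{CKO}) concerns the field constructed there and serves only to feed the hypotheses of the uniqueness theorem --- it is not a substitute for the direct verification on the ansatz, which genuinely consumes the imaginary part. Since the full complex system \eqref{eq-disc2} is assumed, the identity you need is still available and the argument closes, but the bookkeeping must be corrected before the reduction is ``transparent.''
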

\begin{proof}
Let us recall that from \cite[Theorem 1.3]{CKO} it follows that the system formed by the real parts of the equations \eqref{def_of_Amk} is equivalent to the velocity matching condition \eqref{vel_match-m}. Hence the uniqueness of the velocity field $w$ is a consequence of the Theorem \ref{th-spiral-unique}. It is enough to show that the coefficients \eqref{from-a1} and \eqref{from-a2} satisfy the conditions $(B_{1})-(B_{6})$. To this end, we take the imaginary part of the discrete system \eqref{eq-disc2} and obtain 
\begin{align}\label{eq-disc2-imag}
\sum_{k=0}^{M-1} \mathrm{Im}(\mathcal{B}_{mk})g_{k} = -\frac{\mu}{a},
\end{align}
where we define $\mathcal{B}_{mk}:= \mathcal{A}_{mk} \sinh^{-1}(\pi A)$. To obtain the precise formulas for the coefficients of the discrete system we observe that, for $k=m$, we have  
\begin{align*}
\mathcal{B}_{mk} =\frac{\cosh(\pi A)}{\sinh(\pi A)} = \frac{e^{2\pi\mathrm{Re}\,A} - e^{-2\pi\mathrm{Re}\,A} - 2i\sin(2\pi\mathrm{Im}\,A)}{e^{2\pi\mathrm{Re}\,A} + e^{-2\pi\mathrm{Re}\,A} - 2\cos(2\pi\mathrm{Im}\,A)}, 
\end{align*} 
which implies that 
\begin{align}\label{im-b-1}
\mathrm{Im}\,\mathcal{B}_{mk} = - 2 D_{a}^{-1}\sin(2\pi\mathrm{Im}\,A),
\end{align}
where we use the notation $$D_{a}:= e^{2\pi\mathrm{Re}\,A} + e^{-2\pi\mathrm{Re}\,A} - 2\cos(2\pi\mathrm{Im}\,A).$$
Furthermore, for $k<m$, the following holds  
\begin{align*}
\mathcal{B}_{mk} = \frac{e^{(\theta_{k}-\theta_{m} + \pi) A}}{\sinh(\pi A)}  = \frac{2e^{(\theta_{k}-\theta_{m})A}\left(e^{2\pi\mathrm{Re}\,A}- e^{2\pi i\mathrm{Im}\,A}\right)}{e^{2\pi\mathrm{Re}\,A} + e^{-2\pi\mathrm{Re}\,A} - 2\cos(2\pi\mathrm{Im}\,A)} 
\end{align*}
and consequently 
\begin{equation}
\hspace{-6pt}\begin{aligned}
\mathrm{Im}\,\mathcal{B}_{mk} & \!=\! 2e^{(\theta_{k}-\theta_{m})\mathrm{Re}\,A}\mathrm{Im}\left(\frac{e^{2\pi\mathrm{Re}\,A}e^{i(\theta_{k}-\theta_{m})\mathrm{Im}\,A} - e^{i(\theta_{k}-\theta_{m}+2\pi)\mathrm{Im}\,A}}{e^{2\pi\mathrm{Re}\,A} + e^{-2\pi\mathrm{Re}\,A} - 2\cos(2\pi\mathrm{Im}\,A)} \right) \\
& = 2D_{a}^{-1}e^{(\theta_{k}-\theta_{m}+2\pi)\mathrm{Re}\,A}\sin((\theta_{k}-\theta_{m})\mathrm{Im}\,A)  \\
& \quad - 2D_{a}^{-1}e^{(\theta_{k}-\theta_{m})\mathrm{Re}\,A}\sin((\theta_{k}-\theta_{m}+2\pi)\mathrm{Im}\,A).
\end{aligned}
\end{equation}
Similarly, for $m<k$, we obtain
\begin{align*}
\mathcal{B}_{mk} = \frac{e^{(\theta_{k}-\theta_{m} - \pi) A}}{\sinh(\pi A)} = \frac{2e^{(\theta_{k}-\theta_{m})A}\left(e^{-2\pi i\mathrm{Im}\,A}-e^{-2\pi\mathrm{Re}\,A}\right)}{e^{2\pi\mathrm{Re}\,A} + e^{-2\pi\mathrm{Re}\,A} - 2\cos(2\pi\mathrm{Im}\,A)} 
\end{align*}
and hence
\begin{equation}\label{im-b-3}
\hspace{-7pt}\begin{aligned}
\mathrm{Im}\,\mathcal{B}_{mk} & = 2e^{(\theta_{k}-\theta_{m})\mathrm{Re}A}\mathrm{Im}\!\left(\!\frac{e^{i(\theta_{k}-\theta_{m}-2\pi)\mathrm{Im}A} - e^{-2\pi\mathrm{Re}A}e^{i(\theta_{k}-\theta_{m})\mathrm{Im}A}}{e^{2\pi\mathrm{Re}\,A} + e^{-2\pi\mathrm{Re}\,A} - 2\cos(2\pi\mathrm{Im}\,A)} \right) \\
& = 2D_{a}^{-1}e^{(\theta_{k}-\theta_{m})\mathrm{Re}\,A}\sin((\theta_{k}-\theta_{m}-2\pi)\mathrm{Im}\,A)  \\
& \quad - 2D_{a}^{-1}e^{(\theta_{k}-\theta_{m}-2\pi)\mathrm{Re}\,A}\sin((\theta_{k}-\theta_{m})\mathrm{Im}\,A).
\end{aligned}
\end{equation}
Observe that the coefficients $A_{1,m}$ and $A_{2,m}$, given by formulas \eqref{form-a} and \eqref{from-a1}, are chosen in Section \ref{sec-am} to satisfy the conditions $(B_{3})$, $(B_{5})$ and $(B_{6})$. To verify that the coefficients meet condition $(B_{1})$, we write 
\begin{align*}
I_{0}\!&:=\! A_{2,0} \!+\! \sum_{l=1}^{M-1}\!(A_{1,l}\sin((2\pi - \theta_{l})\mathrm{Im}\,A)\!+\!A_{2,l}\cos((2\pi - \theta_{l})\mathrm{Im}\,A))e^{(\theta_{l}-2\pi)\mathrm{Re}\,A} \\
& = 2ag_{0}D_{a}^{-1}\sin(2\pi\mathrm{Im}\,A) + D_{a}^{-1}\sum_{l=1}^{M-1} 2ag_{l}\sin((2\pi-\theta_{l})\mathrm{Im}\,A)e^{\theta_{l}\mathrm{Re}\,A}  \\
&\quad - D_{a}^{-1}\sum_{l=1}^{M-1} 2ag_{l}\cos(2\pi\mathrm{Im}\,A)\sin((2\pi-\theta_{l})\mathrm{Im}\,A)e^{(\theta_{l}-2\pi)\mathrm{Re}\,A} \\
&\quad + D_{a}^{-1}\sum_{l=1}^{M-1} 2ag_{l}\sin(2\pi\mathrm{Im}\,A)\cos((2\pi - \theta_{l})\mathrm{Im}\,A)e^{(\theta_{l}-2\pi)\mathrm{Re}\,A}.
\end{align*}
By applying trigonometric formulas and \eqref{im-b-1}\,--\,\eqref{im-b-3}, we calculate further
\begin{align*}
I_{0}& := 2ag_{0}D_{a}^{-1}\sin(2\pi\mathrm{Im}\,A) + D_{a}^{-1}\sum_{l=1}^{M-1}2ag_{l}\sin((2\pi - \theta_{l})\mathrm{Im}\,A)e^{\theta_{l}\mathrm{Re}\,A} \\
& \quad +  D_{a}^{-1}\sum_{l=1}^{M-1}2ag_{l}\sin(\theta_{l}\mathrm{Im} \,A)e^{(\theta_{l}-2\pi)\mathrm{Re}\,A}  = -a\sum_{k=0}^{M-1} \mathrm{Im}(\mathcal{B}_{0k})g_{k} = \mu,
\end{align*}
where in the last equality we used the first equation of the system \eqref{eq-disc2-imag}. Hence the condition $(B_{1})$ follows. Since the coefficients \eqref{from-a1} and \eqref{from-a2} are chosen such that the equality \eqref{eq-z-zero} holds, it follows that condition $(B_{2})$ is also satisfied. To check that the remaining condition $(B_{4})$ is satisfied for all $1\le m \le M-1$, we compute
\begin{align*}
I_{m}& := D_{a}\sum_{l=0}^{M-1}A_{1,l}\sin((2\pi\mathbf{1}_{m < l}+\theta_{m}-\theta_{l})\mathrm{Im}\,A)e^{(\theta_{l} - 2\pi\mathbf{1}_{m<l})\mathrm{Re}\,A} \\
& \quad + D_{a}\sum_{l=0}^{M-1}A_{2,l}\cos((2\pi\mathbf{1}_{m<l} + \theta_{m} - \theta_{l})\mathrm{Im}\,A)e^{(\theta_{l} - 2\pi\mathbf{1}_{m<l})\mathrm{Re}\,A} \\
&  = \sum_{l=0}^{M-1}2ag_{m}e^{2\pi\mathrm{Re}\,A}\sin((2\pi\mathbf{1}_{m < l}+\theta_{m}-\theta_{l})\mathrm{Im}\,A)e^{(\theta_{l} - 2\pi\mathbf{1}_{m<l})\mathrm{Re}\,A} \\
& \quad - \sum_{l=0}^{M-1}2ag_{m}\cos(2\pi\mathrm{Im}\,A)\sin((2\pi\mathbf{1}_{m < l}+\theta_{m}-\theta_{l})\mathrm{Im}\,A)e^{(\theta_{l} - 2\pi\mathbf{1}_{m<l})\mathrm{Re}\,A} \\
& \quad + \sum_{l=0}^{M-1}2ag_{m}\sin(2\pi\mathrm{Im}\,A)\cos((2\pi\mathbf{1}_{m<l} + \theta_{m} - \theta_{l})\mathrm{Im}\,A)e^{(\theta_{l} - 2\pi\mathbf{1}_{m<l})\mathrm{Re}\,A},
\end{align*}
which, after applying trigonometric formulas and \eqref{im-b-1}\,--\,\eqref{im-b-3}, yields
\begin{align*}
I_{m}& = \sum_{l=0}^{M-1}2ag_{m}\sin((2\pi\mathbf{1}_{m < l}+\theta_{m}-\theta_{l})\mathrm{Im}\,A)e^{(\theta_{l} + 2\pi\mathbf{1}_{l\le m}))\mathrm{Re}\,A} \\
& \quad + \sum_{l=0}^{M-1}2ag_{m}\sin((2\pi\mathbf{1}_{l\le m} - \theta_{m} + \theta_{l})\mathrm{Im}\,A)e^{(\theta_{l} - 2\pi\mathbf{1}_{m<l})\mathrm{Re}\,A} \\
& = -aD_{a}e^{\theta_{m}\mathrm{Re}\,A} \sum_{k=0}^{M-1} \mathrm{Im}(\mathcal{B}_{mk})g_{k} = \mu D_{a}e^{\theta_{m}\mathrm{Re}\,A},
\end{align*}
where in the last equality we use the $m$-th equation of the system \eqref{eq-disc2-imag}. This shows that condition $(B_{4})$ is satisfied  and completes the proof of the proposition.
\end{proof}

\section{Velocity field determined by a family of spirals}\label{sec-velocity-m}
As a continuation of the preceding section, we now apply the inverse mapping of the mapping $f$, as established in Lemma \ref{mapping}, on the complex potential $\tilde\Phi$ associated with the vector field $\tilde{w}$, which was obtained in Proposition \ref{prop-form}. Observe that the potential is given by $\tilde\Phi = \tilde \Phi_{1} + i \tilde\Phi_{2}$, where 
\begin{align*}
\tilde\Phi_{1}(z) := \sum_{l=0}^{M-1} e^{r_{l}(z)} \left(-\frac{A_{1,l}}{2a}\cos\varphi_{l}(z) + \frac{A_{2,l}}{2a}\sin\varphi_{l}(z)\right)
\end{align*}
and
\begin{align*}
\tilde\Phi_{2}(z) := \sum_{l=0}^{M-1} e^{r_{l}(z)} \left(\frac{A_{1,l}}{2a}\sin\varphi_{l}(z) + \frac{A_{2,l}}{2a}\cos\varphi_{l}(z)\right),
\end{align*}
which together with \eqref{form-a} implies that 
\begin{equation}\label{eq-bba}
\begin{aligned}
\tilde\Phi(z) & = \sum_{l=0}^{M-1} \left(-\frac{A_{1,l}}{2a} + i\frac{A_{2,l}}{2a} \right) e^{r_{l}(z)} e^{-i\varphi_{l}(z)}\\
& = \sum_{l=0}^{M-1} \left(-\frac{A_{1,l}}{2a} + i\frac{A_{2,l}}{2a}\right)e^{-2aiz}e^{\theta_{l}A}e^{2\pi\left(\mathbf{1}_{\SS_{<l}}(z)-\mathbf{1}_{\SS_{<0}}(z)\right)A} \\
& = \sum_{l=0}^{M-1} \frac{g_{l} e^{2\pi A}}{1-e^{2\pi A}}e^{-2aiz}e^{\theta_{l}A}e^{2\pi\left(\mathbf{1}_{\SS_{<l}}(z)-\mathbf{1}_{\SS_{<0}}(z)\right)A}
\end{aligned}
\end{equation}
for $z\in \SS_{F}$. In what follows, we will need the lemma below.
\begin{lem}\label{lem-winding}
For any $0\le k \le M-1$, we have
\begin{align*}
J(r,\theta, k) - J(r,\theta) + 1 = \mathbf{1}_{\SS_{<k}}(f^{-1}(z)), \quad z = re^{i\theta}\in\RR_{F},
\end{align*}
where 
\begin{align*}
J (r,\theta ,k ) := \min \left\{ j\in \Z  \ | \  a(2\pi j + \theta_k - \theta ) + \ln r >0 \right\}
\end{align*}
and $J(r,\theta)$ is given by the formula \eqref{def-j}.
\end{lem}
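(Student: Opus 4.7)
The plan is to reduce the identity to a clean comparison between two elementary inequalities: one describing membership of $f^{-1}(z)$ in the strip $\SS_{<k}$, and one describing which integer realizes the minimum in the definition of $J(r,\theta,k)$. Both sides of the claimed equality are indicator-valued (either $0$ or $1$), so it suffices to line up the case splits.

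First, I would use the explicit formula for $f^{-1}$ from Lemma \ref{mapping} to rewrite
\[
\mathrm{Re}\,f^{-1}(z) \;=\; \frac{\ln r - a\theta + 2\pi a(J(r,\theta)-1)}{1+a^{2}} \;=\; \frac{h(J(r,\theta)-1)}{1+a^{2}},
\]
where I introduce the auxiliary affine function $h(j):=a(2\pi j-\theta)+\ln r$. Since $z\in\RR_{F}$ guarantees $f^{-1}(z)\notin \ell_{0}\cup\ldots\cup\ell_{M}$, the real part lies strictly in $\bigl(-\tfrac{2\pi a}{1+a^{2}},0\bigr)$ and avoids every $-\tfrac{a\theta_{k}}{1+a^{2}}$. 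Consequently, $\mathbf{1}_{\SS_{<k}}(f^{-1}(z))=1$ is equivalent to the strict inequality $h(J(r,\theta)-1)< -a\theta_{k}$.

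Next I would analyze $J(r,\theta,k)$ by comparing the increasing linear functions $h(j)$ and $g(j):=h(j)+a\theta_{k}=a(2\pi j+\theta_{k}-\theta)+\ln r$, both of which increase in $j$ with step $2\pi a$. Since $g\geq h$ and $h(J(r,\theta))>0$, we immediately get $J(r,\theta,k)\leq J(r,\theta)$. On the other hand, the gap $2\pi a$ between consecutive values of $g$ is strictly larger than $a\theta_{k}<2\pi a$, so $g(J(r,\theta)-2)<0$, ruling out $J(r,\theta,k)\leq J(r,\theta)-2$. Hence only two cases remain: $J(r,\theta,k)=J(r,\theta)-1$, which is equivalent to $g(J(r,\theta)-1)>0$, i.e.\ $h(J(r,\theta)-1)>-a\theta_{k}$; or $J(r,\theta,k)=J(r,\theta)$, which corresponds to $h(J(r,\theta)-1)<-a\theta_{k}$ (the equality case being excluded by $z\in\RR_{F}$).

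Matching these two alternatives against the characterization of $\mathbf{1}_{\SS_{<k}}(f^{-1}(z))$ from the first step yields $J(r,\theta,k)-J(r,\theta)+1=\mathbf{1}_{\SS_{<k}}(f^{-1}(z))$ in both cases. I do not anticipate a serious obstacle; the only subtle point is making sure the strict-versus-weak inequality in the definition of $J(r,\theta)$ is consistently tracked, which is handled by the observation that points mapping to the spiral family $\Sigma_{0},\ldots,\Sigma_{M-1}$ are excluded from $\RR_{F}$, so all boundary-of-strip configurations can be safely discarded.
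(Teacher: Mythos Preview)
Your proof is correct and follows essentially the same approach as the paper: both arguments use the explicit formula for $\mathrm{Re}\,f^{-1}(z)$ from Lemma~\ref{mapping} and reduce the identity to the single dichotomy $h(J(r,\theta)-1)\lessgtr -a\theta_{k}$. The paper presents this as a chain of equalities after substituting $\ln r = x+ay$ and $\theta = y-ax+2\pi(J(r,\theta)-1)$ directly into the definition of $J(r,\theta,k)$, whereas you organize it as a case analysis after first pinning down $J(r,\theta,k)\in\{J(r,\theta)-1,J(r,\theta)\}$; the content is the same.
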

\begin{proof}
Given $z = re^{i\theta}\in\RR_{F}$, we use the formula for the inverse of the function $f$ from Lemma \ref{mapping}, to obtain $f^{-1}(z) = x + yi\in \SS_{F}$, where 
\begin{align*}
x = \frac{\ln r - a\theta + 2a\pi(J(r,\theta) - 1)}{1 + a^{2}}, \quad  y=\frac{\theta + a\ln r - 2\pi(J(r,\theta) - 1) }{1 + a^{2}}.
\end{align*}
Then $r=e^{x+ay}$ and $\theta= y-ax + 2\pi(J(r,\theta) - 1)$, which implies that
\begin{align*}
J(r, \theta ,k ) & = \min \left\{ j\in \Z  \ | \  a(2\pi j + \theta_k -  \theta ) + \ln  r >0 \right\} \\
& = \min \left\{ j\in \Z  \ | \  a(2\pi(j-J(r,\theta) + 1) + \theta_k - y+ax) + x+ay >0 \right\} \\
& = \min \left\{ j\in \Z  \ | \  2\pi(j-J(r,\theta) + 1) + \theta_k > - \frac{(1+a^{2})}{a}x \right\} \\
& = \min \left\{ j\in \Z  \ | \  2\pi j + \theta_k > - \frac{(1+a^{2})}{a}x \right\} + J(r,\theta) - 1\\
& = \mathbf{1}_{\SS_{<k}}(x+yi) + J(r,\theta) - 1 \\
& = \mathbf{1}_{\SS_{<k}}(f^{-1}(z)) + J(r,\theta) - 1.
\end{align*}
Thus the proof of the lemma is completed.
\end{proof}
Observe that the formula for the inverse function from Lemma \ref{mapping} gives
\begin{equation}\label{eq-fun-1}
-2aif^{-1}(z) = iA(\ln r + i(\theta - 2\pi J(r,\theta))) - 2\pi A, \quad z=re^{i\theta}\in\RR_{F}.
\end{equation}
Applying \eqref{eq-fun-1} and Lemma \ref{lem-winding} to the equality \eqref{eq-bba}, we obtain
\begin{align*}
\tilde\Phi(f^{-1}(z)) &= \sum_{l=0}^{M-1} \frac{g_{l} e^{2\pi A}}{1-e^{2\pi A}} e^{-2aif^{-1}(z)}e^{\theta_{l}A}e^{2\pi\left(\mathbf{1}_{\SS_{<l}}(f^{-1}(z)) - \mathbf{1}_{\SS_{<0}}(f^{-1}(z))\right)A} \\
& = \sum_{l=0}^{M-1} \frac{g_{l}e^{\theta_{l}A}}{1-e^{2\pi A}} e^{iA(\ln r + i(\theta - 2\pi J(r,\theta)))} e^{2\pi J(r,\theta,l)A}e^{-2\pi J(r,\theta)A} \\
& = \sum_{l=0}^{M-1} \frac{g_{l}e^{\theta_{l}A}}{1-e^{2\pi A}} e^{iA(\ln r + i(\theta - 2\pi J(r,\theta,l)))}
\end{align*}
which coincides exactly with the potential given in \cite[eq. (1.31)]{CKO} for $M\ge 2$.

Hence, we obtain the following result. 
\begin{theorem}\label{th_wiele_spiral}
Let $w$ be a two-dimensional weakly divergence-free velocity field satisfying the decay condition \eqref{decay-m} such that 
\[
\curl w = \sum_{k=0}^{M-1}\gamma_{k}\, \delta_{\Sigma_{k}}
\]
in the sense of distributions. If the spiral parameters satisfy the system of equations \eqref{eq-disc2}, 
then the potential and the velocity field are expressed by 
\begin{equation*}
\Phi(z) = \sum_{l=0}^{M-1} \frac{g_{l}e^{\theta_{l}A}}{1-e^{2\pi A}} e^{iA(\ln r + i(\theta - 2\pi J(r,\theta,l)))}
\end{equation*}
and
\begin{equation*}
w(z) =  e^{i\theta } \sum_{l=0}^{M-1} \frac{2ag_{l} }{r(a-i)} \left(  r^{\frac{2a}{a+i} }e^{A  (\theta_l -\theta ) } \frac{e^{2\pi J (r,\theta ,l)A }}{1-e^{2\pi A}} \right)^*
\end{equation*}
for $z = re^{i\theta} \in\RR_{F}$.
\end{theorem}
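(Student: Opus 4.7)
The plan is to assemble the ingredients already developed in Sections~\ref{wiele_spiral} and the preceding derivations, so the proof is essentially a matter of chaining together known results and then pulling the strip formula back to the spiral frame. First, I would invoke Theorem~\ref{th-spiral-unique} to conclude that, under the standing assumptions together with the velocity matching condition (which, by \cite[Theorem 1.3]{CKO}, is equivalent to the real part of the discrete system \eqref{eq-disc2}), the field $w$ is uniquely determined. Thus it suffices to exhibit a $w$ of the asserted form and verify that it satisfies the hypotheses; uniqueness then closes the argument.

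Next, I would recall Proposition~\ref{prop-form}, which gives the transformed velocity $\tilde w = \tilde w_1 + i \tilde w_2$ explicitly in terms of the functions $r_l$ and $\varphi_l$ and the coefficients $A_{1,l},A_{2,l}$ from \eqref{from-a1}, \eqref{from-a2}. From $\tilde w^* = \tilde \Phi'$ and a direct integration component-wise I would obtain the complex potential $\tilde \Phi$ on the strip $\SS_F$; substituting the identity \eqref{form-a} for $-A_{1,l}+iA_{2,l}$ collapses the expression into the compact form \eqref{eq-bba},
\begin{equation*}
\tilde\Phi(z) = \sum_{l=0}^{M-1} \frac{g_{l} e^{2\pi A}}{1-e^{2\pi A}}\, e^{-2aiz}\, e^{\theta_{l}A}\, e^{2\pi(\mathbf{1}_{\SS_{<l}}(z)-\mathbf{1}_{\SS_{<0}}(z))A}.
\end{equation*}

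The third step is to pull $\tilde\Phi$ back by $f^{-1}$. Using the formula $-2aif^{-1}(z) = iA(\ln r + i(\theta - 2\pi J(r,\theta))) - 2\pi A$ coming from Lemma~\ref{mapping}, and crucially Lemma~\ref{lem-winding}, which rewrites the indicator $\mathbf{1}_{\SS_{<l}}(f^{-1}(z))$ as $J(r,\theta,l) - J(r,\theta) + 1$, the $2\pi A$ shift and the $\mathbf{1}_{\SS_{<0}}$ term cancel against each other, and the winding number $J(r,\theta)$ is absorbed into $J(r,\theta,l)$. This produces exactly
\begin{equation*}
\Phi(z) := \tilde\Phi(f^{-1}(z)) = \sum_{l=0}^{M-1} \frac{g_{l}e^{\theta_{l}A}}{1-e^{2\pi A}}\, e^{iA(\ln r + i(\theta - 2\pi J(r,\theta,l)))},
\end{equation*}
which matches \cite[eq.~(1.31)]{CKO}.

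Finally, the formula for $w$ follows from $w^* = \Phi'$ by a direct computation: differentiating the exponential $e^{iA(\ln r + i(\theta - 2\pi J(r,\theta,l)))}$ in $z = re^{i\theta}$ produces the prefactor $2a/(r(a-i))$ after using $A = -2ai/(a+i)$ and taking complex conjugates, which yields the claimed formula for $w$. The only subtlety to address is that $\Phi$ and $w$ are \emph{a priori} defined piecewise on the open connected components of $\RR_F$, and one must check that the discontinuities of $J(r,\theta,l)$ across each $\Sigma_m$ combine with the jumps of the indicator functions in such a way that the required jump conditions \eqref{skoki-nor-m}--\eqref{skoki-tan-m} and matching condition \eqref{vel_match-m} hold; but this is ensured by construction, since $\tilde w$ was built to satisfy the transformed conditions $(B_1)$--$(B_6)$ on the strip, and the conformal mapping $f$ converts these back to the desired conditions on $\RR_F$. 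The main bookkeeping obstacle is therefore the telescoping of winding numbers and indicator functions in the pullback step, and this is precisely what Lemma~\ref{lem-winding} is engineered to handle. \hfill $\square$
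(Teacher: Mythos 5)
Your proposal is correct and follows essentially the same route as the paper: it relies on Proposition~\ref{prop-form} for the explicit strip velocity, integrates to the compact potential \eqref{eq-bba} via \eqref{form-a}, and pulls back through $f^{-1}$ using Lemma~\ref{lem-winding} to telescope the winding numbers, exactly as in Section~\ref{sec-velocity-m}. The only difference is cosmetic — you invoke Theorem~\ref{th-spiral-unique} directly for uniqueness, whereas the paper routes this through Proposition~\ref{prop-form}.
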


\vspace{0.5cm}
\noindent
{\bf Acknowledgment.} Parts of the article were completed during TC's visits to Nicolaus Copernicus University in Toru\'n and PK's visits to IMPAN. Both authors are grateful for the support provided by the {\em Small Meetings} program of IMPAN and the IDUB visiting program at NCU.

\parindent = 0 pt
\end{document}